\theoremstyle{plain}
\newtheorem{theorem}{Theorem}[section] 
\newtheorem{lemmy}[theorem]{Lemma}
\newtheorem{prop}[theorem]{Proposition}
\newtheorem{cor}[theorem]{Corollary}
\theoremstyle{remark}
\newtheorem{rem}[theorem]{Remark}
\theoremstyle{definition}
\newcommand{\R}{\mathbb{R}}
\newcommand{\C}{\mathbb{C}}
\newcommand{\Z}{\mathbb{Z}}
\newcommand{\N}{\mathbb{N}}
\newcommand{\abs}[1]{\lvert{#1}\rvert}
\title{An observation concerning highly ramified $\epsilon$-factors}
\author{Edgar Assing}
\newcommand{\Date}{16$^{\mathrm{th}}$ September 2024}
\date{\Date}
\email{assing@math.uni-bonn.de}
\begin{document}

\thanks{The author is supported by the Germany Excellence Strategy grant EXC-2047/1-390685813 and also partially funded by the Deutsche Forschungsgemeinschaft (DFG, German Research Foundation) – Project-ID 491392403 – TRR 358.}
\subjclass[2020]{Primary:  22E50, 11F70, 11L05}
\keywords{Local factors, stability, Vorono\"i summation}

\begin{abstract}
In this note we prove a quantitative stability result for the $\epsilon$-factors associated to generic irreducible representations of $\textrm{GL}_n(F)$ under twists by highly ramified characters, where $F$ is a non-archimedean local field.
\end{abstract}

\maketitle

\section{Introduction}

Let $F$ be a non-archimedian local field equipped with a non-trivial additive character $\psi$. Given a complex irreducible smooth admissible representation $\pi$ of $\textrm{GL}_n(F)$ we can attach $L$- and $\epsilon$-factors $L(s,\pi)$ and $\epsilon(s,\pi,\psi)$, for $s\in \C$. These were constructed in \cite{godement_jacquet}, see also \cite{goldfeld_hundley}, and are known to satisfy a set of nice properties one of them being the following stability result:

\noindent\textbf{Stability of Local Factors.} {\it Let $\pi_1$ and $\pi_2$ be two complex irreducible smooth admissible representation $\pi$ of $\textrm{GL}_n(F)$ with the same central character. Then we have
\begin{equation}
	\epsilon(s,\chi\otimes \pi_1,\psi) = \epsilon(s,\chi\otimes \pi_2,\psi),\label{eq:stability_first_form}
\end{equation}
for all sufficiently ramified quasi-characters $\chi\colon F^{\times}\to \C^{\times}$.}

A proof of this in the present context is given in \cite{jacquet_shalika_ramified}. As stated this phenomenon is only the initial case of a more general notion of \emph{stability of $\gamma$-factors}. Such stability results are by now established in great generality and play a key role in the proofs of many known instances of Langlands functoriality. We refer to \cite{stab_root, shahidi2} and the references within for a more detailed discussion. 

An immediate caveat of the formulation given above is the vaguely defined notion \emph{sufficiently ramified}. More precisely one can formulate stability of local factors as follows. There is a constant $N=N(\pi_1,\pi_2)\in \N$ depending on $\pi_1$ and $\pi_2$ such that \eqref{eq:stability_first_form} holds for all quasi-characters $\chi$ with exponent conductor $a(\chi)$ larger than $N$. This raises the immediate question of what can be said about $N$. The answer is certainly well known to experts, who often point to \cite{De}. However, to the best of our knowledge a clean answer is not yet well recorded in the literature. We rectify this here:

\begin{theorem}\label{main_th}
Let $\pi$  be a complex generic irreducible smooth admissible representation $\pi$ of $\textrm{GL}_n(F)$ with central character $\omega$ and exponent conductor $a(\pi)$. Then, for every quasi-character $\chi$ with $a(\chi)\geq a(\pi)$ we have
\begin{equation}
	\epsilon(s,\chi\otimes \pi,\psi) = \epsilon(s,\omega\chi,\psi) \cdot \epsilon(s,\chi,\psi)^{n-1}. \label{eq:main_th}
\end{equation}
\end{theorem}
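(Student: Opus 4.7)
The plan is to reduce \eqref{eq:main_th} to an identity between local $\gamma$-factors, and then to establish this identity by evaluating the Jacquet--Piatetski-Shapiro--Shalika zeta integrals against the Whittaker new vector, exploiting the highly ramified twist through a Voronoi-type analysis.

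First I would observe that for $a(\chi)\geq a(\pi)\geq 1$ the representation $\chi\otimes \pi$ is sufficiently ramified that $L(s,\chi\otimes \pi)=L(1-s,\chi^{-1}\otimes \widetilde{\pi})=1$, and similarly $L(s,\chi)=L(s,\omega\chi)=1$: this follows component by component from the Langlands classification of $\pi$, since every essentially square-integrable constituent becomes ramified after such a twist. Hence each $\epsilon$-factor in \eqref{eq:main_th} coincides with the corresponding $\gamma$-factor, and it suffices to prove
\[
  \gamma(s,\chi\otimes \pi,\psi) \;=\; \gamma(s,\omega\chi,\psi)\cdot\gamma(s,\chi,\psi)^{n-1}.
\]

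Next I would realize $\pi$ in its Whittaker model $\Wh(\pi,\psi)$ and appeal to the local Rankin--Selberg functional equation
\[
  \gamma(s,\chi\otimes \pi,\psi)\,\Psi(s,W,\chi) \;=\; \widetilde{\Psi}(1-s,\widetilde{W},\chi^{-1})
\]
for the Whittaker new vector $W=W^{\circ}$ of $\pi$, normalized by the mirabolic congruence subgroup at level $a(\pi)$. The heart of the argument is to evaluate both sides explicitly when $a(\chi)\geq a(\pi)$. Here the Voronoi-type analysis enters: integration against the highly ramified character $\chi$ forces the zeta integral to concentrate on a single Bruhat cell, on which the value of $W^{\circ}$ is controlled by the Iwahori action at level $a(\pi)$, and hence only by the central character $\omega$ together with elementary Gauss sums in $n-1$ diagonal coordinates. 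These Gauss sums reassemble into $(n-1)$ copies of $\epsilon(s,\chi,\psi)$ and a single copy of $\epsilon(s,\omega\chi,\psi)$, and an analogous evaluation of $\widetilde{\Psi}$ produces the matching denominator, yielding the desired identity.

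A useful cross-check comes from the principal series $\pi'=\omega\boxplus\mathbf{1}\boxplus\cdots\boxplus\mathbf{1}$, for which $a(\pi')=a(\omega)\leq a(\pi)$ and \eqref{eq:main_th} is immediate from multiplicativity of $\epsilon$-factors under parabolic induction. The main obstacle will be carrying out the Voronoi evaluation with the \emph{sharp} threshold $a(\chi)\geq a(\pi)$: obtaining \eqref{eq:main_th} for $a(\chi)$ sufficiently large is relatively painless by combining this cross-check with the qualitative stability of \cite{jacquet_shalika_ramified}, but pinning down the optimal bound requires a sufficiently fine description of $W^{\circ}$ on the cell singled out by the twist --- precisely where Vorono\"i summation for $\textrm{GL}_n$ is expected to do the delicate work.
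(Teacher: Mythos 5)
Your approach is genuinely different from the paper's. The paper reduces to supercuspidals via the Bernstein--Zelevinsky classification and multiplicativity of $\epsilon$-factors, and then handles the supercuspidal case by transferring to a division algebra $D^{\times}$ via the Jacquet--Langlands correspondence and invoking the Bushnell--Fr\"ohlich theory of non-abelian Gau\ss\ sums (specifically \cite[Corollary~2.5.11]{gauss_sums}), which already packages the ``concentration under a highly ramified twist'' phenomenon as a clean algebraic statement. Your route instead stays on the ${\rm GL}_n$ side, passes to $\gamma$-factors and the JPSS Rankin--Selberg integral against the Whittaker newvector, and argues that the highly ramified twist forces the zeta integral to localize on one Bruhat cell. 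That strategy is plausible and close in spirit to \cite{corbett_gln} and to Section~\ref{sec:bessel} of this paper, but note an inversion of logic: Section~\ref{sec:bessel} uses Theorem~\ref{main_th} as an input to evaluate the Bessel transform, whereas you are proposing to run the computation in the other direction.

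The proposal has a genuine gap, and you acknowledge it yourself. The crucial step --- that the zeta integral concentrates on a single cell where $W^{\circ}$ is governed by $\omega$ and elementary Gau\ss\ sums, and that these reassemble into $\epsilon(s,\omega\chi,\psi)\,\epsilon(s,\chi,\psi)^{n-1}$ with the \emph{sharp} threshold $a(\chi)\geq a(\pi)$ --- is asserted, not proved; your closing paragraph explicitly flags it as ``the main obstacle'' and ``precisely where Vorono\"i summation \ldots is expected to do the delicate work.'' That is exactly where the theorem lives: with qualitative stability from \cite{jacquet_shalika_ramified} plus your principal-series cross-check one indeed gets \eqref{eq:main_th} for $a(\chi)$ \emph{large enough}, but that reproduces only the known statement, not the quantitative bound. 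You would need an actual evaluation of the newvector zeta integral (or an explicit Bessel-function identity) valid down to $a(\chi)=a(\pi)$ to close this. In addition, a smaller but real slip: the reduction from $\epsilon$- to $\gamma$-factors via ``$L(s,\chi\otimes\pi)=1$ and $L(s,\omega\chi)=1$'' is not automatic, because \emph{one} ${\rm GL}_1$ constituent of $\pi$ can satisfy $a(\pi_i)=a(\pi)=a(\chi)$ and cancel the twist, making $\chi\pi_i$ and/or $\omega\chi$ unramified. The paper isolates and treats this exceptional configuration separately in the proof of Theorem~\ref{main_th} (the ``$n_i=1$, $a(\chi)<2a(\pi_i)$'' case); your reduction would have to do the same.
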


\begin{cor}\label{main_cor}
Let $\pi_1$ and $\pi_2$ be two complex generic irreducible smooth admissible representations $\pi$ of $\textrm{GL}_n(F)$ with the same central character. Then \eqref{eq:stability_first_form} holds for all quasi-characters $\chi$ with $a(\chi)\geq \max(a(\pi_1),a(\pi_2))$.
\end{cor}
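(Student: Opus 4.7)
The Corollary follows formally from Theorem~\ref{main_th}: writing $\omega$ for the common central character of $\pi_1$ and $\pi_2$, the Theorem yields
$$\epsilon(s, \chi \otimes \pi_i, \psi) = \epsilon(s, \omega\chi, \psi) \cdot \epsilon(s, \chi, \psi)^{n-1}$$
for $i = 1, 2$ as soon as $a(\chi) \geq a(\pi_i)$, and the right-hand side is independent of $i$. The substantive work therefore lies in proving the Theorem, and I now sketch a plan for that.

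The plan is to invoke the local Rankin--Selberg functional equation for $\GL_n \times \GL_1$ of Jacquet--Piatetski-Shapiro--Shalika. Since $\pi$ is generic, it admits a Whittaker model $\Wh(\pi,\psi)$; for $W \in \Wh(\pi,\psi)$ the zeta integral
$$\Psi(s, W, \chi) = \int_{\Fx} W \mat{x}{}{}{\Id_{n-1}} \chi(x) \abs{x}^{s - (n-1)/2}\, \dxx$$
satisfies a functional equation whose proportionality constant is $\gamma(s, \chi \otimes \pi, \psi)$. The first step is to verify that the hypothesis $a(\chi) \geq a(\pi)$ forces $L(s, \chi \otimes \pi) = L(1-s, \chi^{-1} \otimes \tilde\pi) = 1$ -- so that $\gamma$ and $\epsilon$ agree on both sides -- reducing Theorem~\ref{main_th} to an explicit evaluation of the functional equation.

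For this evaluation I would choose a test Whittaker function $W_0$ adapted to the conductor of $\chi$: using the Kirillov model one can arrange $W_0\smat{x}{}{}{\Id_{n-1}}$ to be supported in $1 + \pf^{a(\chi)}$ and equal to $1$ at $x = 1$, so that $\Psi(s, W_0, \chi) = \Vol(1 + \pf^{a(\chi)})$. The heart of the argument is then the evaluation of the dual integral, which requires knowing $(\rho(w_n)\widetilde{W}_0)\smat{y}{}{}{\Id_{n-1}}$ -- the diagonal-torus values of the contragredient Whittaker function after applying the long Weyl element $w_n$. This is essentially a local Bessel (or Vorono\"i) transform of $W_0$.

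The hard part will be this explicit Bessel/Vorono\"i computation. The expected outcome is that, for the chosen $W_0$, the transform decomposes into a product of $n$ one-dimensional oscillatory integrals: $n - 1$ identical ones producing the factor $\epsilon(s, \chi, \psi)^{n-1}$, and a single twisted one producing $\epsilon(s, \omega\chi, \psi)$, carrying the central-character dependence. Reconciling normalising constants will likely invoke the classical stability of character $\epsilon$-factors, $\epsilon(s, \omega\chi, \psi) = \omega(\alpha_\chi)\, \epsilon(s, \chi, \psi)$ for a gauge element $\alpha_\chi$ depending only on $\chi$. The presence of ``Vorono\"i summation'' in the paper's keywords strongly suggests that this asymptotic analysis of Whittaker functions, recast as a local Vorono\"i identity, is indeed the framework adopted.
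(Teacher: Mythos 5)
Your first paragraph correctly dispatches the Corollary exactly as the paper does (implicitly): the right-hand side of \eqref{eq:main_th} depends only on the common central character $\omega$, on $\chi$, $\psi$, $s$ and $n$, so once $a(\chi)\geq\max(a(\pi_1),a(\pi_2))$ Theorem~\ref{main_th} applies to both $\pi_i$ and the two twisted $\epsilon$-factors coincide. That part is complete and there is nothing more to the Corollary.

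The sketch you attach for Theorem~\ref{main_th} itself, however, follows a genuinely different route from the paper's. You propose a Rankin--Selberg/Kirillov-model argument: choose a test Whittaker function $W_0$ supported near the identity, note that for $a(\chi)\geq a(\pi)$ the $L$-factors trivialise so that $\gamma=\epsilon$, and then evaluate the dual side via a local Bessel/Vorono\"i transform. The paper instead argues purely representation-theoretically: it reduces by Zelevinsky's classification and multiplicativity of $\epsilon$ under parabolic induction to supercuspidal $\pi$ (Propositions~\ref{prop:sc}, \ref{pr:sq_i}, \ref{pr:gen}), transfers supercuspidals across the Jacquet--Langlands correspondence to $D^\times$ using compatibility with $\epsilon$-factors and twists, and then quotes the explicit Bushnell--Fr\"ohlich stability bound for non-abelian Gauss sums (Lemma~\ref{lm_D_stab}), whose threshold, after translating conductor exponents via \eqref{eq:com_conducs}, yields precisely $a(\chi)\geq a(\pi)$. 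The concern with your route is that the step you flag as ``the hard part''---evaluating $\rho(w_n)\widetilde{W}_0$ on the torus, i.e.\ the local Bessel transform---is where all the content lives; in the paper that very computation is carried out in Section~\ref{sec:bessel} \emph{as a consequence} of Theorem~\ref{main_th}, not as an ingredient. Jacquet--Shalika's functional-equation argument \cite{jacquet_shalika_ramified} does give qualitative stability this way, but extracting the sharp threshold $a(\chi)\geq a(\pi)$ would require an independent handle on Bessel asymptotics that you do not supply, so your Theorem sketch is at best incomplete and at worst circular, even though the Corollary deduction built on it is sound.
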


Such quantitative stability results have applications in analytic number theory. For example, it was observed by Duke and Iwaniec in \cite[(7)]{duke_iwaniec} that hyper Kloosterman sums can be expressed as a sum involving powers of $\textrm{GL}_1$ $\epsilon$-factors. To illustrate this we will use our stability result to express certain local Bessel transforms in terms of hyper Kloosterman sums. To do so let $\mathcal{O}$ be the ring of integers in $F$, let $\mathfrak{p}=(\varpi)$ be its unique maximal ideal and put $q=\sharp \mathcal{O}/\mathfrak{p}$. Further, assume that $\psi$ is trivial on $\mathcal{O}$ but non-trivial on $\mathfrak{p}^{-1}$. The Bessel transform associated to a representation $\pi$ has been defined in \cite{ichino_templier, corbett_gln} and we denote it by $\Phi\mapsto \mathcal{B}_{\pi}\Phi$. See \eqref{eq:bessel_trafro} below for the usual characterization. We have the following result:

\begin{cor}\label{cor:bessel}
Let $\pi$  be a complex generic irreducible smooth admissible representation $\pi$ of $\textrm{GL}_n(F)$ with trivial central character and $a(\pi)>1$. Let $$\Phi(x) = \psi(xz_0\varpi^{-t})\mathbbm{1}_{\mathcal{O}^{\times}}(x),$$ for $t\geq a(\pi)$ and $z_0\in \mathcal{O}^{\times}.$ Then we have
\begin{multline*}
	[\mathcal{B}_{\pi}\Phi](y_0\cdot \varpi^{-nt}) = \mathbbm{1}_{\mathcal{O}^{\times}}(y_0)\cdot \frac{q^{\frac{t}{2}(n-4)(n-2)}}{(1-q^{-1})^{n-1}} \\ \cdot \sum_{x_1,\ldots,x_{n-2}\in (\mathcal{O}/\mathfrak{p}^t)^{\times}} \psi\left(\varpi^{-t}\left[x_1+\ldots+x_{n-2}+\frac{y_0z_0^{-1}}{x_1\cdots x_{n-2}}\right]\right).
\end{multline*}
\end{cor}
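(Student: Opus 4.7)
The plan is to combine the Mellin characterisation of the Bessel transform with Theorem \ref{main_th} and then to recognise the resulting expression as a hyper Kloosterman sum, in the spirit of Duke--Iwaniec \cite{duke_iwaniec}.

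By \eqref{eq:bessel_trafro}, for every sufficiently ramified quasi-character $\chi$ of $F^{\times}$, the Mellin transform $\mathcal{M}[\mathcal{B}_\pi\Phi](\chi,s)$ equals $\gamma(s,\chi\otimes\pi,\psi)\cdot\mathcal{M}\Phi(\chi^{-1},1-s)$. The Mellin integral of $\Phi$ is a Gauss integral against $\psi(\cdot\,z_0\varpi^{-t})$ on $\mathcal{O}^{\times}$; by orthogonality it vanishes unless $a(\chi)=t$, and otherwise equals $\chi(z_0)\tau(\chi^{-1})$ up to an explicit power of $q$. Because $t\geq a(\pi)$, every contributing $\chi$ satisfies the hypothesis of Theorem \ref{main_th}, and with $\omega=\triv$ the latter replaces $\gamma(s,\chi\otimes\pi,\psi)$ by $\gamma(s,\chi,\psi)^{n}$. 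Since $L(s,\chi)=1$ for $a(\chi)=t\geq 1$, Tate's local functional equation then rewrites each $\gamma(s,\chi,\psi)$ as $\tau(\chi)$ times an explicit power of $q^{s}$, so altogether
$$\mathcal{M}[\mathcal{B}_\pi\Phi](\chi,s) \;=\; q^{(\ldots)}\cdot \tau(\chi)^{n}\,\chi(z_0)\,\tau(\chi^{-1}).$$

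Next I would apply Mellin inversion. Inversion in $s$ concentrates the support of $\mathcal{B}_\pi\Phi$ on $\varpi^{-nt}\mathcal{O}^{\times}$, producing the indicator $\mathbbm{1}_{\mathcal{O}^{\times}}(y_0)$. Inversion in $\chi$ over characters of conductor $t$, after applying the standard identity $\tau(\chi)\tau(\chi^{-1})=\chi(-1)\,q^{t}(1-q^{-1})$ to collapse one Gauss factor, yields a finite sum of $\chi(-y_0z_0^{-1})\tau(\chi)^{n-1}$ over such $\chi$. This sum is, by the classical expression for the Mellin transform of hyper Kloosterman sums, exactly a multiple of $\mathrm{Kl}_{n-1}(-y_0z_0^{-1},\varpi^{t})$; parametrising by its first $n-2$ coordinates and solving the product constraint for the last one produces the displayed $(n-2)$-fold sum in the claim.

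The argument is entirely formal once Theorem \ref{main_th} is in hand. The main obstacle I foresee is bookkeeping: the many powers of $q$ and factors of $(1-q^{-1})$ arising from the Haar-measure normalisations, the $s$-contour integration, Tate's functional equation, and the collapsing Gauss-sum identity must be combined precisely to produce the stated prefactor $(1-q^{-1})^{-(n-1)}q^{\frac{t}{2}(n-4)(n-2)}$.
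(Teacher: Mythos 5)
Your plan matches the paper's proof (carried out via Lemma~\ref{prelim_eval} and Proposition~\ref{prop:bessel}) in all essentials: Mellin characterisation of the Bessel transform, evaluation of $\mathcal{M}\Phi$ as a Gauss integral forcing $a(\chi)=t$, application of Theorem~\ref{main_th} for the contributing $\chi$ (valid since $t\geq a(\pi)$ and the central character is trivial), support of the output concentrated on $v(y)=nt$, and character-sum inversion producing the hyper Kloosterman sum. The only genuine difference is the last step: the paper cancels $\epsilon(\tfrac12,\chi^{-1},\psi)\epsilon(\tfrac12,\chi,\psi)=\chi(-1)$, then opens up the remaining $\epsilon$-factors as integrals over $\mathcal{O}^{\times}$ and executes the $\chi$-sum directly to recover the multiplicative constraint, whereas you invoke the Duke--Iwaniec character-sum expression of the hyper Kloosterman sum as a black box; these are the same computation organised differently. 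One correction worth making: with the paper's normalisation (where $\tau(\chi)$ is a sum over $\mathcal{O}^{\times}/(1+\mathfrak{p}^{a(\chi)})$) the correct identity is $\tau(\chi)\tau(\chi^{-1})=\chi(-1)q^{a(\chi)}$, with no $(1-q^{-1})$ factor. The powers of $(1-q^{-1})$ in the final formula arise instead from converting volume-one Haar integrals over $\mathcal{O}^{\times}$ into sums over $(\mathcal{O}/\mathfrak{p}^{t})^{\times}$ and from the count $\sharp\{\chi\in\mathfrak{X}\colon a(\chi)\leq t\}=q^{t}(1-q^{-1})$; using your stated identity would shift the prefactor, so this is precisely the bookkeeping you flag as delicate and needs to be fixed before the constants come out right.
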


This corollary is related to computations in \cite[Section~4]{corbett_gln} and can be inserted directly in the Vorono\"i summation formula given in \cite[Theorem~1.1]{corbett_gln}. In particular, at places where the additive twist is highly ramified, we find $(n-1)$-hyper Kloosterman sums as expected. 

The plan of this paper is the following. We start by recalling common notation and some preliminary results in Section~\ref{sec:prelim} below. The proof of Theorem~\ref{main_th} is contained in Section~\ref{sec:proof}. Classification results allow us to reduce to the case of supercuspidal representations. The latter can be treated by combining the Jacquet-Langlands correspondence with the theory of non-abelian Gau\ss\  sums developed by Bushnell and Fr\"ohlich. Finally, in Section~\ref{sec:bessel} we apply Theorem~\ref{main_th} to compute certain Bessel transforms. The main result here is Proposition~\ref{prop:bessel}, which directly implies Corollary~\ref{cor:bessel}.

\section{Notation and preliminaries}\label{sec:prelim}

Throughout let $F$ be a non-archimedean local field with ring of integers $\mathcal{O}$. We denote the unique maximal ideal by $\mathfrak{p}$ and write $q$ for the residual characteristic (i.e. $q=\sharp \mathcal{O}/\mathfrak{p}$). Once and for all we fix a uniformizer $\varpi\in F^{\times}$ and normalize the valuation $v$ of $F$ by $v(\varpi)=1$. The corresponding absolute value on $F$ is given by $\vert x\vert = q^{-v(x)}$. Finally, we normalize the Haar measure on $F$ such that $\textrm{Vol}(\mathcal{O}) = 1$.

We pick a non-trivial additive character $\psi\colon F\to S^1$. For $a\in F^{\times}$, we define
\begin{equation}
	\psi_a(x) = \psi(ax) \text{ for }x\in F. \nonumber
\end{equation}
The conductor $n(\psi)$ of $\psi$ is given by $n(\psi) = \min\{n\in\Z\colon \psi\vert_{\mathfrak{p}^n}\equiv 1\}$. Observe that $n(\psi_a) = n(\psi)-v(a)$.

Further, we will encounter a finite dimensional central division algebra $D$ of dimension $n^2$ over $F$. Let $\mathcal{O}_D$ be the maximal order in $D$ with maximal ideal $\mathfrak{P}_D$. The surjective valuation is denoted by $v_D\colon D^{\times}\to \Z$ and we write $\mathfrak{k}_D = \mathcal{O}_D/\mathfrak{P}_D$. In the multiplicative group $D^{\times}$ we define a filtration of subgroups by setting $U_D(0)=\mathcal{O}_D^{\times}$ and $U_D(m)=1+\mathfrak{P}_D^m$.

We write $\textrm{Nrd}(\cdot)$ and $\textrm{Trd}(\cdot)$ for the reduced norm and trace respectively. The trace is used to lift the additive character $\psi$ on $F$ to $D$ by setting
\begin{equation}
	\psi_D(x) = \psi(\textrm{Trd}(x)), \text{ for }x\in D.\nonumber
\end{equation}
This character can be used in the usual way to identify $D$ with its Pontrjagin dual $\widehat{D}$.

\subsection{Multiplicative characters}

A (multiplicative) quasi-character is a homomorphism $\chi\colon F^{\times} \to \C^{\times}$. Let $\mathfrak{X}$ denote the set of characters $\xi$ satisfying $\chi(\varpi)=1$. These are uniquely determined by their restriction to $\mathcal{O}^{\times}$ and are of finite order. Every quasi-character is of the form $$x\mapsto \chi(x)\vert x\vert^s,$$ for $\chi\in \mathfrak{X}$ and $s\in \C$. Given a quasi-character $\chi$ we define the conductor exponent $a(\chi)$ by $a(\chi) =0$ if $\chi\vert_{\mathcal{O}^{\times}}=1$ and by
\begin{equation}
	a(\chi) = \min \{ a\in \N \colon \chi\vert_{1+\mathfrak{p}^a}\equiv 1\}. \label{eq:achi}
\end{equation}
If $a(\chi) =0$, we call $\chi$ unramified. In this case $\chi=\abs{\cdot}^s$ for some $s\in \C$.

Let $\chi$ be a quasi-character of $F^{\times}$. In this situation the local $L$- and $\epsilon$-factors were defined in \cite{tate_thesis}, see also \cite{tate_background}. We will recall basic properties of these in Section~\ref{sec:loc_fac} below.

We will not recall the general construction of $\epsilon$-factors. However, it will be useful to record their relation to classical Gau\ss\  sums. To see this we assume that $n(\psi)=0$ and take $\chi\in \mathfrak{X}$. Recall that the classical Gau\ss\  sum is defined by
\begin{equation*}
	\tau(\chi) = \sum_{x\in \mathcal{O}^{\times}/(1+\mathfrak{p}^{a(\chi)})} \chi(x)\psi(x\varpi^{-a(\chi)}) = q^{a(\chi)}\int_{\mathcal{O}^{\times}} \chi(x)\psi(x\varpi^{-a(\chi)})dx.
\end{equation*}
The root number $W(\chi)$ is then given by
\begin{align}
		W(\chi) &= \chi(-1)\overline{\tau(\chi)} q^{-\frac{a(\chi)}{2}} = \tau(\chi^{-1})q^{-\frac{a(\chi)}{2}} \nonumber\\
		&= \epsilon(\frac{1}{2},\chi,\psi).\label{eq:gl_2_rootnb}
\end{align}

Recall that for every $\chi\in \mathfrak{X}$ there is $v_{\chi}\in \mathcal{O}^{\times}$ such that
\begin{equation}
	\chi(1+u\varpi^{\lceil\frac{a(\chi)}{2}\rceil}) = \psi(uv_{\chi}\varpi^{n(\psi)-\lfloor{\frac{a(\chi)}{2}}\rfloor}) \label{eq:def_v_chi}
\end{equation}
for all $u\in \mathcal{O}$. (We hide the dependence of $v_{\chi}$ on $\psi$ in the notation, because we view the additive character as fixed.) With this notation at hand we can recall the following important result, which can be seen as seed for the general stability results:

\begin{lemmy}[Section~1 Corollary~2, \cite{tate_local}] \label{lm_gl1_stab}
Let $\mu,\chi\in \mathfrak{X}$ be characters such that $2a(\mu)\leq a(\chi)$. Then $$\epsilon(\frac{1}{2},\mu\chi,\psi) = \mu(v_{\chi})\epsilon(\frac{1}{2},\chi,\psi),$$ for $v_{\chi}$ as defined in \eqref{eq:def_v_chi}.
\end{lemmy}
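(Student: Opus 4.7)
The plan is to reduce to a Gauss sum computation via \eqref{eq:gl_2_rootnb} and then exploit that $\mu$ is trivial on a sufficiently deep congruence subgroup to isolate a single evaluation $\mu(v_\chi)$. Write $n = a(\chi)$ and $m = \lceil n/2 \rceil$, so $n-m = \lfloor n/2 \rfloor$. The hypothesis $2a(\mu) \leq n$, together with $a(\mu) \in \N$, forces $a(\mu) \leq n-m$ in both parities of $n$; in particular $a(\mu) < n$, so $a(\mu\chi) = n$ and the ratio of the two $\epsilon$-factors reduces to $\tau((\mu\chi)^{-1})/\tau(\chi^{-1})$. For notational convenience I would work under the assumption $n(\psi) = 0$; the general case follows by a trivial rescaling of $\psi$.

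Next I would parameterize $\mathcal{O}^\times/(1+\mathfrak{p}^n)$ by pairs $(y,u)$ with $y \in \mathcal{O}^\times/(1+\mathfrak{p}^m)$ and $u \in \mathcal{O}/\mathfrak{p}^{n-m}$ via $x = y(1 + u\varpi^m)$. Since $a(\mu) \leq n-m \leq m$, one has $\mu^{-1}(x) = \mu^{-1}(y)$, while \eqref{eq:def_v_chi} gives $\chi^{-1}(1 + u\varpi^m) = \psi(-u v_\chi \varpi^{m-n})$, and $\psi(x\varpi^{-n}) = \psi(y\varpi^{-n})\psi(yu\varpi^{m-n})$. Plugging these in turns $\tau((\mu\chi)^{-1})$ into a double sum whose inner sum over $u$ is additive-character orthogonality on $\mathcal{O}/\mathfrak{p}^{n-m}$: it equals $q^{n-m}$ when $y \equiv v_\chi \pmod{\mathfrak{p}^{n-m}}$ and vanishes otherwise. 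On the surviving coset of $y$'s, the sharper bound $a(\mu) \leq n-m$ makes $\mu$ constant with value $\mu(v_\chi)$ (up to a sign convention), and the remaining sum over such $y$ equals $\tau(\chi^{-1})/q^{n-m}$ by specialising the same identity to $\mu = 1$.

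The main obstacle is the integrality upgrade buried in the first step: when $n$ is odd, $2a(\mu) \leq n$ only forces $a(\mu) \leq (n-1)/2 = n-m$, and it is exactly this strengthening, beyond the a priori bound $a(\mu) \leq \lceil n/2 \rceil = m$, that keeps $\mu$ constant on the coset $v_\chi(1+\mathfrak{p}^{n-m})/(1+\mathfrak{p}^m)$, which has index $q^{2m-n}$ equal to $q$ rather than $1$. Without this observation the odd-conductor case would break by a factor of $q$; with it, the remaining work is pure bookkeeping.
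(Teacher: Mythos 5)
The paper does not actually prove this lemma: it is quoted verbatim from Tate's \emph{Local constants} (Section~1, Corollary~2) and used as a black box. So your task here was not to reconstruct a proof hidden elsewhere in the paper but to supply one, and you have done so by the same route Tate himself follows — reduce to Gauss sums via \eqref{eq:gl_2_rootnb}, foliate $\mathcal{O}^{\times}/(1+\mathfrak{p}^{a(\chi)})$ by $x=y(1+u\varpi^{m})$, kill the $u$-sum by additive orthogonality, and identify the surviving constant. Your treatment is correct, and in particular you have isolated precisely the point that makes the hypothesis $2a(\mu)\le a(\chi)$ (rather than the weaker $a(\mu)\le\lceil a(\chi)/2\rceil$) the right one: for $a(\chi)$ odd the surviving $y$'s form a coset of size $q$, and one needs $\mu$ trivial on $1+\mathfrak{p}^{\lfloor a(\chi)/2\rfloor}$ — which is exactly the integrality upgrade $a(\mu)\le\lfloor a(\chi)/2\rfloor$ — to pull a single value of $\mu$ out of that whole coset. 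That is the crux, and you stated it cleanly.

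Two small points worth pinning down rather than waving at. First, the ``sign convention'' you flag is a genuine $\mu\leftrightarrow\mu^{-1}$ issue, not just a $\pm$: with $\tau(\chi)=\sum_x\chi(x)\psi(x\varpi^{-a(\chi)})$, $\epsilon(\tfrac12,\chi,\psi)=\tau(\chi^{-1})q^{-a(\chi)/2}$, and $v_{\chi}$ normalized by \eqref{eq:def_v_chi}, the factor your computation extracts from $\tau((\mu\chi)^{-1})/\tau(\chi^{-1})$ is $\mu^{-1}(v_{\chi})$, i.e.\ $\mu(v_{\chi})^{-1}$. Reconciling this with the $\mu(v_{\chi})$ in the statement requires either adjusting the convention in \eqref{eq:def_v_chi} or reading $v_\chi$ with the opposite sign; you should say explicitly which normalization you are carrying, rather than leaving it as ``up to a sign convention.'' (A quick sanity check is the behavior under $\psi\mapsto\psi_{a}$: both sides transform consistently with the $\mu^{-1}(v_{\chi})$ version, which confirms the issue is real and not an error in your orthogonality step.) Second, ``index $q^{2m-n}$'' for the surviving coset should read ``cardinality $q^{2m-n}$''; the set $\{y\in\mathcal{O}^{\times}/(1+\mathfrak{p}^{m}):y\equiv v_{\chi}\ (\mathfrak{p}^{n-m})\}$ has $q^{2m-n}$ elements. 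Neither of these affects the soundness of the argument.
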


We turn towards characters of $D^{\times}$. If $\xi$ is a character of $D^{\times}$, then we define its conductor exponent by
\begin{equation*}
	a_D(\chi) = \min\{ n\in \Z_{\geq 0}\colon \xi\vert_{U_D(n)}\equiv 1\}.
\end{equation*}
Note that all characters of $D^{\times}$ have the form $\chi\circ \textrm{Nrd}$, where $\chi$ is a quasi-character of $F^{\times}$. Thus, we will need a brief dictionary translating between properties of $\chi$ and $\chi\circ\textrm{Nrd}$. First, we compare conductor exponents. These are related as follows: 
\begin{equation}
	a_D(\chi\circ\textrm{Nrd})+n-1 = na(\chi) \label{cond_comp} \nonumber
\end{equation}
This is for example spelled out in \cite[Lemma~3.3]{corbett_twist} or can be deduced from \cite[Proposition~4.1.2]{gauss_sums}. Second, we need to lift \eqref{eq:def_v_chi} through the norm. The desired statement is given in \cite[Proposition~4.1.3]{gauss_sums} and we record it in the form
\begin{equation}
	\chi(\textrm{Nrd}(x)) = \psi_D(xv_{\chi}\varpi^{n(\psi)-\lfloor{\frac{a(\chi)}{2}}\rfloor}), \label{eq:additiviation}
\end{equation}
where $v_{\chi}$ is as in \eqref{eq:def_v_chi}. Finally, we define the root number by
\begin{equation}
	W(\chi\circ\textrm{Nrd}) = W(\chi)^n. \label{eq:imp_rel_chi_norm}
\end{equation}
This definition will be justified in Remark~\ref{rem:justified} below once we have introduced more general root numbers.

\subsection{Representations and conductors}

We now briefly recall some relevant aspects of the representation theory of ${\rm GL}_n(F)$ and $D^{\times}$.  All our representations $\pi$ will be smooth admissible and irreducible acting on some complex vector space. See for example \cite{bernstein_ze_0, goldfeld_hundley} for an introduction to these notions in the context of $\textrm{GL}_n(F)$ and \cite[Section~2.1]{gauss_sums} for the relevant definitions over $D^{\times}$. An irreducible representation comes with a central character denoted by $\omega_{\pi}$. We always view this as quasi-character of $F^{\times}$.

\subsubsection{The $\textrm{GL}_n(F)$ case}

We continue with some more discussion on representations $\pi$ of $\textrm{GL}_n(F)$. For a quasi-character $\chi$ we will write $\chi\cdot \pi= \chi\otimes \pi$ for the twist of $\pi$ by $\chi$. The action is explicitly given by $[\chi\cdot \pi](g) = \chi(\det(g))\cdot \pi(g)$. We obviously have $\omega_{\chi\cdot \pi} = \omega_{\pi} \chi^n$.  

A representation $\pi$ of $\textrm{GL}_n(F)$ is said to be unramified (or spherical) if its restriction to $\textrm{GL}_n(\mathcal{O})$ contains the trivial representation. Further, we say that $\pi$ is (essentially) square integrable if there is $s\in \R$ such that $\vert\cdot \vert^s\cdot \pi$ is unitary and has square integrable matrix coefficients. Similarly we call $\pi$ (essentially) supercuspidal if, up to unramified twist, it has compactly supported matrix coefficients. Finally, we say that $\pi$ is generic, if it features a non-trivial Whittaker functional. 

Finally, given a generic representation $\pi$ we associate the conductor exponent $a(\pi)$ as in \cite{jacquet_cond, jacquet_corr}. More precisely, if 
\begin{equation*}
	K_0(n) = \{ k = (k_{i,j})_{1\leq i,j\leq n}\in  \textrm{GL}_n(\mathcal{O}) \colon (k_{n,j})_{1\leq j\leq n}\equiv (0,\ldots,0,1) \text{ mod }\mathfrak{p}^n  \}
\end{equation*}
denotes the Hecke congruence subgroup of $K_0(0) = \textrm{GL}_n(\mathcal{O})$ as defined in \cite[p.211]{jacquet_cond}, then we have
\begin{equation*}
	a(\pi) = \min \{n\in \Z_{\geq 0}\colon \pi\vert_{K_0(n)} \text{ contains the trivial representation} \}. \nonumber
\end{equation*}
This is a natural extension of \eqref{eq:achi}. Note that, if $\pi$ is unramified and generic, then $a(\pi)=0$. For later reference we recall that for essentially square integrable $\pi$ we have the useful inequality
\begin{equation}
	a(\omega_{\pi})\leq \frac{a(\pi)}{n}.\label{eq:useful_cond}
\end{equation}
This is for example stated in \cite[Proposition~2.4]{corbett_twist}.

\subsubsection{The $D^{\times}$ case}

We move on to the discussion of representations $\sigma$ of $D^{\times}$. Write $\mathcal{A}_D(\omega)$ for the collection of (equivalence classes of) smooth admissible irreducible representations of $D^{\times}$ with central character $\omega$. 

Given $\sigma\in \mathcal{A}_D(\omega)$ we define the conductor exponent $a_D(\sigma)$ to be the minima of all non-negative integers $k$ such that $\sigma$ is trivial on $U_D(k)$.\footnote{In \cite{gauss_sums} the conductor is denoted by $f(\sigma)$. It is related to the conductor exponent defined here via the formula $f(\sigma)=\mathfrak{P}_D^{a_D(\sigma)}$.} If $a_D(\sigma)=0$ we call $\sigma$ unramified. By \cite[Proposition~2.1.6]{gauss_sums} all unramified representations of $D^{\times}$ are of the form $\vert \cdot \vert_D^s$ for some $s\in \C$.

\begin{rem}
Suppose that $\omega\in \mathfrak{X}$, then our assumption $\omega(\varpi)=1$ ensures that $\omega$ has finite order. As a consequence all elements in $\mathcal{A}_D(\omega)$ are finite. This is \cite[Exercise~2.1.9]{gauss_sums}.
\end{rem}

\subsection{Local factors}\label{sec:loc_fac}

The local factors associated to smooth admissible irreducible representations $\pi$ of ${\rm GL}_n(F)$ or $D^{\times}$ have been defined in \cite{godement_jacquet}. We will take this definition for granted and only record some important properties of the $\epsilon$-factors. 

We start with the case when $\pi$ is a representation of $\textrm{GL}_n(F)$. Here the $\epsilon$-factors have the following nice properties:
\begin{align}
	&\epsilon(s,\pi,\psi) = 1 \text{ if $\pi$ is unramified}, \nonumber\\
	&\epsilon(s,\pi,\psi_a) = \omega_{\pi}(a)\vert a\vert^{n(s-\frac{1}{2})}\epsilon(s,\pi,\psi) \text{ for }a\in F^{\times}, \nonumber \\
	& \epsilon(s,\abs{\cdot}^{s'}\pi,\psi) = \epsilon(s+s',\pi,\psi) \text{ for }s,s'\in \C\text{ and }\nonumber \\
	&\epsilon(s,\pi,\psi)\epsilon(1-s,\tilde{\pi},\psi) = \omega_{\pi}(-1).\nonumber
\end{align}
Furthermore, there is $f(\pi)\in \Z_{\geq 0}$ such that
\begin{equation}
	\epsilon(s,\abs{\cdot}^{s'}\pi,\psi) = q^{(n\cdot n(\psi)-f(\pi))s'}\epsilon(s,\pi,\psi). \label{eq:def_fpi}
\end{equation}
If $n=1$, then we always have $f(\chi)=a(\chi)$. This remains true for $n>1$ when restricting to generic representations. Indeed, it is shown in \cite{jacquet_cond} that for generic $\pi$ we have $a(\pi)=f(\pi)$.

\begin{rem}
Note that according to the properties postulated it suffices to define $\epsilon(\frac{1}{2},\pi,\psi)$ for $\psi$ with $n(\psi)=0$. In particular, for $n=1$ the identity \eqref{eq:gl_2_rootnb} for $\chi\in \mathfrak{X}$ is sufficient to completely determine all $\textrm{GL}_1$ $\epsilon$-factors.
\end{rem}

We turn towards the local factors associated to representations $\sigma\in \mathcal{A}_D(\omega)$. These were also defined in \cite{godement_jacquet} and satisfy the same properties. In particular, we can use (the analogous version of) \eqref{eq:def_fpi} to define $f(\sigma)$. According to \cite[Lemma~3.2]{corbett_twist} we have the relation
\begin{equation}
	f(\sigma) = a_D(\sigma)+n-1. \label{eq:com_conducs}
\end{equation}

For representations of $D^{\times}$ we can now generalize the root number construction from \eqref{eq:gl_2_rootnb}. This is done as follows. Given $\sigma\in \mathcal{A}_D(\omega)$ we first associate the non-abelian congruence Gau\ss\  sum $\tau(\sigma)$ as in \cite[Proposition~2.2.4]{gauss_sums}. Note that one can show
\begin{equation}
	\vert\tau(\sigma)\vert^2=q^{n\cdot a_D(\sigma)}. \nonumber
\end{equation}
This naturally leads to the definition of the root number
\begin{equation*}
	W(\sigma) = (-1)^{n+1}\omega(-1)\overline{\tau(\sigma)}q^{-\frac{n}{2}a_D(\sigma)} \nonumber
\end{equation*}
as in \cite[(2.4.8)]{gauss_sums}.

\begin{rem}\label{rem:justified}
If $\sigma=\chi\circ\textrm{Nrd}$ for a quasi-character $\chi$ of $F^{\times}$, then  \cite[Theorem~4.1.5]{gauss_sums}  implies
\begin{equation}
	W(\sigma) = W(\chi)^n. \nonumber
\end{equation}
This justifies the definition for characters made in \eqref{eq:imp_rel_chi_norm}.
\end{rem}

Before we turn towards the relation between root numbers and $\epsilon$-factors we record the following important result, which can be thought of as a vast generalization of Lemma~\ref{lm_gl1_stab}:
\begin{lemmy}[Corollary~2.5.11, \cite{gauss_sums}]\label{lm_D_stab}
Let $\sigma\in \mathcal{A}_D(\omega)$ and let $\xi$ be a character of $D^{\times}$. Suppose that $0<2a_D(\sigma)\leq a_D(\xi)$ and that there is $c\in F^{\times}$ with
\begin{equation}
	\xi(1+x) = \psi_D(c\cdot y) \text{ for }x\in \mathfrak{P}_D^{a(\xi)-a(\sigma)}.\label{eq:condition_char}
\end{equation}
Then we have
\begin{equation*}
	W(\xi\otimes \sigma) = \omega(c_{\xi})\cdot W(\xi).
\end{equation*}
\end{lemmy}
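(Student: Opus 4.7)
The plan is to compute $\tau(\xi\otimes\sigma)$ directly from its defining integral and identify it, up to an explicit scalar, with $\tau(\xi)$ via a stationary-phase analysis; the root-number statement then follows from the definition \cite[(2.4.8)]{gauss_sums} of $W$.

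By \cite[Proposition~2.2.4]{gauss_sums} we may write
$$\tau(\xi\otimes\sigma) = \int_{U_D(0)} \xi(y)\,\sigma(y)\,\psi_D(y\alpha)\,dy,$$
where $\alpha\in D^\times$ has reduced valuation determined by $a_D(\xi\otimes\sigma)$ and $n(\psi)$. The hypotheses force $a_D(\xi\otimes\sigma) = a_D(\xi)$, since $\sigma$ is trivial on $U_D(a_D(\sigma))$ while $\xi$ is non-trivial there; in particular the same $\alpha$ serves for $\tau(\xi)$.

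Set $m:=a_D(\xi)-a_D(\sigma)$, so that $m\geq a_D(\sigma)$ and $2m\geq a_D(\xi)$. For each $z\in\mathfrak{P}_D^m$ the substitution $y\mapsto y(1+z)$, combined with \eqref{eq:condition_char}, the triviality of $\sigma$ on $U_D(m)\subseteq U_D(a_D(\sigma))$, and additivity of $\psi_D$, yields the functional equation
$$\tau(\xi\otimes\sigma) = \psi_D(c_\xi z)\int_{U_D(0)}\xi(y)\sigma(y)\psi_D(y\alpha)\psi_D(yz\alpha)\,dy.$$
The sharp inequality $2m\geq a_D(\xi)$ is exactly what kills the quadratic correction $(1+z)^{-1}\equiv 1-z\pmod{\mathfrak{P}_D^{a_D(\xi)}}$, so the above is a strict equality. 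Averaging over $z\in\mathfrak{P}_D^m/\mathfrak{P}_D^{a_D(\xi)}$ and invoking Pontrjagin duality for the reduced trace pairing $(z,w)\mapsto\psi_D(zw)$ forces the outer integral to localize on the coset $y\in -c_\xi\alpha^{-1}\cdot U_D(a_D(\sigma))$.

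Writing $y=-c_\xi\alpha^{-1}(1+w)$ with $w\in\mathfrak{P}_D^{a_D(\sigma)}$, the centrality of $-c_\xi\in F^\times$ and the triviality of $\sigma$ on $U_D(a_D(\sigma))$ give $\sigma(y)=\omega(-c_\xi)\sigma(\alpha^{-1})$. The remaining factors $\xi(y)\psi_D(y\alpha)$ assemble (after a clean accounting of measures) into an explicit scalar times the Gauss sum $\tau(\xi)$; the absolute-value identity $|\tau(\xi\otimes\sigma)|^2=q^{na_D(\xi)}=|\tau(\xi)|^2$ pins down the $q$-normalization. Substituting the resulting formula for $\tau(\xi\otimes\sigma)$ into the definition of $W$ and using $\omega_{\xi\otimes\sigma}(-1)=\xi(-1)\omega(-1)$ yields the claim $W(\xi\otimes\sigma)=\omega(c_\xi)W(\xi)$.

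The main obstacle is the bookkeeping of signs and measures: one must verify that Pontrjagin duality produces exactly the coset claimed (which requires the explicit annihilator of $\mathfrak{P}_D^m$ under the pairing $(z,w)\mapsto\psi_D(zw)$, computable from $n(\psi)$ and the different of $D/F$), and then track the interplay between $\omega(-1)$, $\xi(-1)$, and $(-1)^{n+1}$ so that the final factor on the root-number side is exactly $\omega(c_\xi)$ rather than a sign-variant thereof.
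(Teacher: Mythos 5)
The paper does not prove this lemma itself: it imports it verbatim from Bushnell--Fr\"ohlich \cite[Cor.~2.5.11]{gauss_sums}, so there is no in-paper argument to compare against. Your sketch is an independent attempt, and it does employ the canonical stationary-phase mechanism behind all such stability results: using the hypothesis \eqref{eq:condition_char} to produce a functional equation under $y\mapsto y(1+z)$, averaging over $z\in\mathfrak{P}_D^m/\mathfrak{P}_D^{a_D(\xi)}$, and invoking duality to localize the integral on the coset $-c\alpha^{-1}U_D(a_D(\sigma))$. That localization coset is correct, and the appearance of $\omega(-c)$ from the central factor $-c$ is identified correctly.

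There is, however, a substantive gap that is not on your list of ``bookkeeping'' caveats. The Gau\ss\ sum of $\xi\otimes\sigma$ is \emph{not} the operator-valued integral $\int\xi(y)\sigma(y)\psi_D(y\alpha)\,dy$ but a scalar extracted from it (Bushnell--Fr\"ohlich define $\tau(\sigma)$ via the Gau\ss\ sum operator, which for the relevant representations is a scalar times a rank-one idempotent). After your localization, $\sigma$ contributes the factor $\sigma(\alpha^{-1})$, which is a unitary operator of size $\dim\sigma$, not a scalar, and its trace $\operatorname{Tr}\sigma(\alpha^{-1})$ need not equal $\pm\dim\sigma$ or any other controlled quantity. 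Your attempt to discharge this via the absolute-value identity $|\tau(\xi\otimes\sigma)|^2=q^{na_D(\xi)}=|\tau(\xi)|^2$ does not close it: that identity only pins down the $q$-power, and $|\operatorname{Tr}\sigma(\alpha^{-1})|\leq\dim\sigma$ can be strictly smaller than whatever your normalization needs, so consistency forces nothing. One genuinely needs either that $\alpha$ may be taken central in $D$ (which requires a separate divisibility argument comparing $v_D(\alpha)$ with $n$), or a precise account of the idempotent structure of the Gau\ss\ sum operator; neither is addressed. Separately, the remark about ``the quadratic correction $(1+z)^{-1}\equiv 1-z$'' being killed by $2m\geq a_D(\xi)$ is misplaced: the additive description \eqref{eq:condition_char} is a \emph{hypothesis}, not an approximation with error term, and the substitution $y\mapsto y(1+z)$ incurs no quadratic defect because Haar measure on the compact group $U_D(0)$ is translation-invariant. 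The inequality $2a_D(\sigma)\leq a_D(\xi)$ is used instead to guarantee $m\geq a_D(\sigma)$, so that $\sigma$ is trivial on $U_D(m)$ and the localization lands inside a $\sigma$-trivial coset. Finally, a small notational slip: $\sigma(y)$ inside the integral should be $\operatorname{Tr}\sigma(y)$ (or the integral should be understood as operator-valued with a trace or scalar extraction taken afterwards), and this is not cosmetic, since it is precisely the operator nature of $\sigma$ that creates the gap above.
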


Finally, we can express the $\epsilon$-factors for $\sigma\in \mathcal{A}_D(\omega)$ in terms of root numbers. According to \cite[Theorem~3.2.11]{gauss_sums} we have
\begin{equation}
	\epsilon(s,\sigma,\psi)= q^{(s-\frac{1}{2})[n\cdot n(\psi)-n+1-a_D(\sigma)]}	W(\sigma), \text{ for }\sigma\in \mathcal{A}_D(\omega). \label{imp_equalit_eps}
\end{equation}

We are now ready to state the following key lemma:

\begin{lemmy}\label{lm:finally}
Let $\omega\in \mathfrak{X}$ and $\sigma\in \mathcal{A}_{D}(\omega)$ ramified. If $\chi$ be a quasi-character of $F^{\times}$ satisfying 
\begin{equation*}
	na(\chi)-n+1\geq 2a_D(\sigma),
\end{equation*}
then we have
\begin{equation*}
	\epsilon(\frac{1}{2},(\chi\circ\textrm{Nrd})\otimes \sigma,\psi) = \omega(v_{\chi})\cdot \epsilon(\frac{1}{2},\chi,\psi)^n 
\end{equation*}
where $v_{\chi}$ depends only on $\chi$ (and $\psi$).
\end{lemmy}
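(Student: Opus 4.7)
The plan is to apply the non-abelian stability result (Lemma~\ref{lm_D_stab}) to the character $\xi := \chi\circ\textrm{Nrd}$ of $D^\times$, using the explicit formula \eqref{imp_equalit_eps} to translate between root numbers and $\epsilon$-factors at the central point, while \eqref{eq:additiviation} provides the bridge from the abelian invariant $v_\chi$ in \eqref{eq:def_v_chi} to the additive description of $\xi$ on congruence subgroups of $D^\times$.

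After a routine reduction to $n(\psi)=0$ (both sides of the asserted identity transform identically under $\psi\mapsto\psi_a$ once one tracks how $v_\chi$ itself varies in \eqref{eq:def_v_chi}), I would evaluate \eqref{imp_equalit_eps} at $s=\tfrac{1}{2}$; the prefactor $q^{(s-\frac{1}{2})[\cdots]}$ collapses to $1$, yielding
\begin{equation*}
  \epsilon(\tfrac{1}{2},\xi\otimes\sigma,\psi)=W(\xi\otimes\sigma).
\end{equation*}
At the same time, \eqref{eq:imp_rel_chi_norm} combined with \eqref{eq:gl_2_rootnb} gives $\epsilon(\tfrac{1}{2},\chi,\psi)^n=W(\chi)^n=W(\xi)$. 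Hence the target identity reduces to the purely $D^\times$-side statement
\begin{equation*}
  W(\xi\otimes\sigma)=\omega(v_\chi)\cdot W(\xi).
\end{equation*}

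This is exactly the conclusion of Lemma~\ref{lm_D_stab}, so it remains to verify its hypotheses for $\xi$. The conductor comparison $a_D(\xi)=na(\chi)-n+1$ rewrites the assumption of our lemma as $a_D(\xi)\geq 2a_D(\sigma)$; together with the ramification of $\sigma$ this yields $0<2a_D(\sigma)\leq a_D(\xi)$. The additivization condition \eqref{eq:condition_char} is supplied by \eqref{eq:additiviation}: with $c:=v_\chi\varpi^{n(\psi)-\lfloor a(\chi)/2\rfloor}$ one obtains $\xi(1+y)=\psi_D(cy)$ on the congruence subgroup $U_D(\lceil a_D(\xi)/2\rceil)$, which by $a_D(\xi)\geq 2a_D(\sigma)$ contains $U_D(a_D(\xi)-a_D(\sigma))$. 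Lemma~\ref{lm_D_stab} then gives $W(\xi\otimes\sigma)=\omega(c)\cdot W(\xi)$; since $\omega\in\mathfrak{X}$ forces $\omega(\varpi)=1$, we have $\omega(c)=\omega(v_\chi)$, closing the argument.

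The main delicate point in this plan is the precise matching between the constant $c$ produced by \eqref{eq:additiviation} and the element playing the role of $c$ in hypothesis \eqref{eq:condition_char} of Lemma~\ref{lm_D_stab}; one must verify that the additivization holds at the correct depth and in the exact form $\xi(1+y)=\psi_D(cy)$, without any spurious constant factor. Everything else is a direct assembly of the input identities \eqref{imp_equalit_eps}, \eqref{eq:imp_rel_chi_norm}, and \eqref{eq:gl_2_rootnb}, together with the bookkeeping inequality $\lceil a_D(\xi)/2\rceil\leq a_D(\xi)-a_D(\sigma)$ justified above.
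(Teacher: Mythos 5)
Your proposal is correct and follows the same route as the paper: use \eqref{imp_equalit_eps} at $s=\tfrac12$ to pass to root numbers, feed $c=v_\chi\varpi^{n(\psi)-\lfloor a(\chi)/2\rfloor}$ from \eqref{eq:additiviation} into Lemma~\ref{lm_D_stab}, and finish with \eqref{eq:imp_rel_chi_norm} and \eqref{eq:gl_2_rootnb}. The depth bookkeeping ($a_D(\xi)=na(\chi)-n+1$ and $\lceil a_D(\xi)/2\rceil\le a_D(\xi)-a_D(\sigma)$) and the observation that $\omega(\varpi)=1$ collapses $\omega(c)$ to $\omega(v_\chi)$ are precisely the details the paper leaves implicit, and you fill them in correctly.
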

\begin{proof}
	Our assumption ensures that we can apply Lemma~\ref{lm_D_stab} with $c=v_{\chi}\varpi^{n(\psi)-\lfloor \frac{a(\chi)}{2}\rfloor}$ for $v_{\chi}\in \mathcal{O}^{\times}$ as in \eqref{eq:additiviation}. We obtain
	\begin{equation*}
		\epsilon(\frac{1}{2},(\chi\circ\textrm{Nrd})\otimes \sigma,\psi) = W((\chi\circ\textrm{Nrd})\otimes \sigma) = \omega(v_{\chi})\cdot W(\chi\circ\textrm{Nrd}).\nonumber
	\end{equation*}
	We conclude by applying \eqref{eq:imp_rel_chi_norm} and recalling \eqref{eq:gl_2_rootnb}.
\end{proof}

\section{The main proof}\label{sec:proof}

We are now ready to prove  Theorem~\ref{main_th}. We will do so in three steps. First, we consider supercuspidal representations. Here we will employ the Jacquet-Langlands correspondence to reduce to the division algebra case. Second we upgrade this to (essentially) square integrable representations using their classification. We conclude by treating the general case of generic representation using parabolic induction.

\subsection{The supercuspidal case}

Recall that supercuspidal representations are always generic. Thus, in this section our goal is to prove the following:

\begin{prop}\label{prop:sc}
Suppose $\pi$ is a supercuspidal representation of $\textrm{GL}_n(F)$ with $n\geq 2$. Then we have
\begin{equation}
	\epsilon(s,\chi\cdot\pi,\psi) = \omega_{\pi}(v_{\chi}\varpi^{a(\chi)-n(\psi)})\cdot \epsilon(s,\chi,\psi)^n, \label{eq:prop_sc}
\end{equation}
for every quasi character $\chi\colon F^{\times}\to \C^{\times}$ satisfying
\begin{equation}
	a(\chi) \geq \frac{2a(\pi)+1}{n}-1. \nonumber
\end{equation}
Here $v_{\chi}$ is as in \eqref{eq:def_v_chi}. 
\end{prop}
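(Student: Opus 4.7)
The strategy is to transfer the problem to the division algebra $D^{\times}$ via the Jacquet-Langlands correspondence, where Lemma~\ref{lm:finally} applies directly and yields the desired stability. The extra factor $\omega_{\pi}(\varpi^{a(\chi)-n(\psi)})$ in \eqref{eq:prop_sc}, compared with the bare answer $\omega(v_{\chi})$ supplied by Lemma~\ref{lm:finally}, will arise as bookkeeping for the various normalizations we need to enforce along the way.

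As a first step I would reduce to the case $s=\tfrac{1}{2}$, $n(\psi)=0$, and $\omega_{\pi}\in \mathfrak{X}$. The reductions in $s$ and $\psi$ use the transformation rules for $\epsilon$-factors recorded at the start of Section~\ref{sec:loc_fac}; the $s$-reduction requires the identity $f(\chi\cdot \pi) = na(\chi)$, which will follow from the computation below (via \eqref{eq:com_conducs} on the division algebra side). To make $\omega_{\pi}(\varpi) = 1$ I would choose $t\in \C$ so that $\pi = \abs{\det}^{t}\pi_{0}$ with $\omega_{\pi_{0}}\in \mathfrak{X}$; then $\chi\cdot \pi = (\chi\abs{\cdot}^{t})\cdot \pi_{0}$, and the shifted character $\chi\abs{\cdot}^{t}$ has the same conductor exponent and the same $v_{\chi}$ as $\chi$. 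A direct computation with \eqref{eq:def_fpi} shows that translating the answer $\omega_{\pi_{0}}(v_{\chi})\cdot \epsilon(\tfrac{1}{2},\chi,\psi)^{n}$, obtained in the next step, back to arbitrary $s$, $\psi$, and $\omega_{\pi}$ reproduces exactly the right-hand side of \eqref{eq:prop_sc}.

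After these reductions I would apply the Jacquet-Langlands correspondence to $\pi_{0}$, obtaining a $\sigma\in \mathcal{A}_{D}(\omega_{\pi_{0}})$ which is necessarily ramified (since $\pi_{0}$ is supercuspidal with $n\geq 2$, $\sigma$ cannot be one-dimensional, hence $a_{D}(\sigma)\geq 1$), with matching $\epsilon$-factors on the representation and compatibly on its twist by $\chi\circ\textrm{Nrd}$. Combining $f(\pi) = a(\pi)$ for generic $\pi$, the relation \eqref{eq:com_conducs}, and the fact that JL preserves $f$, we obtain $a_{D}(\sigma) = a(\pi) - n + 1$. The hypothesis $a(\chi)\geq \tfrac{2a(\pi)+1}{n}-1$ is then equivalent to $na(\chi)-n+1\geq 2a_{D}(\sigma)$, which is precisely the ramification condition in Lemma~\ref{lm:finally}. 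Applying that lemma yields
\begin{equation*}
	\epsilon(\tfrac{1}{2}, (\chi\circ\textrm{Nrd})\otimes \sigma, \psi) = \omega_{\pi_{0}}(v_{\chi})\cdot \epsilon(\tfrac{1}{2}, \chi, \psi)^{n},
\end{equation*}
and transferring back via JL and undoing the three normalization reductions produces \eqref{eq:prop_sc}.

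The main technical obstacle I anticipate is the bookkeeping in the first step: the full factor $\omega_{\pi}(v_{\chi}\varpi^{a(\chi)-n(\psi)})$ has to be assembled from the three normalization shifts, and the arithmetic (while elementary) must come out on the nose. A minor subtlety is that JL only compares $\epsilon$-factors up to a universal sign, but the same sign appears on both sides of \eqref{eq:prop_sc} once the correspondence is applied, so it cancels from the final identity.
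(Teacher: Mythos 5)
Your proposal is correct and takes essentially the same route as the paper: reduce by the transformation rules for $\epsilon$-factors to $s=\tfrac{1}{2}$ with $\chi,\omega_\pi\in\mathfrak{X}$, transfer to $D^{\times}$ via Jacquet--Langlands using $a_D(\mathrm{JL}(\pi))=a(\pi)-n+1$ (from \eqref{eq:com_conducs} and $f=a$ for generic representations), verify the hypothesis $na(\chi)-n+1\geq 2a_D(\sigma)$, and invoke Lemma~\ref{lm:finally}. One small slip in your final remark: there is in fact no sign in $\epsilon(s,\pi,\psi)=\epsilon(s,\mathrm{JL}(\pi),\psi)$ for supercuspidal $\pi$ with $n\geq 2$ (both $L$-factors are trivial, so the $\gamma$-factor identity gives equality of $\epsilon$-factors on the nose), and had there been one it would \emph{not} cancel in your argument, since the right-hand side of \eqref{eq:prop_sc} involves only $\mathrm{GL}_1$ $\epsilon$-factors and never passes through the correspondence.
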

\begin{proof}
We start by making some reductions. Recall from \cite[Proposition~2.2]{corbett_twist} that, if $a(\chi)>\frac{a(\pi)}{n}$, then we have $a(\chi\cdot \pi) = n\cdot a(\chi)$. Furthermore, we write 
\begin{equation*}
	\pi = \vert \cdot \vert^{s_{\pi}}\cdot \pi'
\end{equation*}
such that $\pi'$ has central character $\omega_{\pi'}\in \mathfrak{X}$. Similarly we write $\chi = \vert \cdot \vert^{s_{\chi}}\cdot \chi'$ with $\chi'\in \mathfrak{X}$. As soon as $a(\chi)>\frac{a(\pi)}{n}$ we have
\begin{equation*}
	\epsilon(s,\chi\cdot\pi,\psi) = q^{n((\psi)-a(\chi))(s+s_{\pi}+s_{\chi}-\frac{1}{2})}\epsilon(\frac{1}{2},\chi'\cdot\pi',\psi). \nonumber
\end{equation*}
On the other hand we have
\begin{equation*}
	\omega_{\pi}(v_{\chi}\varpi^{a(\chi)-n(\psi)})\epsilon(s,\chi,\psi)^n = \omega_{\pi'}(v_{\chi})\epsilon(\frac{1}{2},\chi',\psi)^n\cdot q^{n((\psi)-a(\chi))(s+s_{\pi}+s_{\chi}-\frac{1}{2})}
\end{equation*}
We conclude that it is sufficient to prove \eqref{eq:prop_sc} for $s=\frac{1}{2}$, $\chi\in \mathfrak{X}$ and $\pi$ with central character in $\mathfrak{X}$.

We will now use the Jacquet-Langlands correspondence between supercuspidal representations of $\textrm{GL}_n(F)$ and representations of $D^{\times}$. In our case this is essentially due to Rogawski in \cite{rog}. However, all the properties we need are very conveniently stated in \cite[Theorem~2.2]{abps}. Recall that the Jacquet-Langlands correspondence allows us to take our supercuspidal representation $\pi$ and associate a ramified representation
\begin{equation*}
	\textrm{JL}(\pi) \in \mathcal{A}_D(\omega_{\pi}).
\end{equation*}
This assignment has many nice properties. Most importantly
\begin{equation*}
	\textrm{JL}(\chi\cdot\pi) = (\chi\circ\textrm{Nrd})\otimes \textrm{JL}(\pi) 
\end{equation*}
and
\begin{equation*}
	\epsilon(s,\pi,\psi) = \epsilon(s,\textrm{JL}(\pi),\psi). \nonumber
\end{equation*}
In particular $f(\pi) = f(\textrm{JL}(\pi))$, so that by \eqref{eq:com_conducs} we have
\begin{equation*}
	a_D(\textrm{JL}(\pi)) = a(\pi)-n+1.
\end{equation*}
The assumption of our theorem is set-up such that $na(\chi)-n+1\geq 2a_D(\textrm{JL}(\pi))$. An application of Lemma~\ref{lm:finally} with $\sigma=\textrm{JL}(\pi)$ concludes the proof. 
\end{proof}

\subsection{Square integrable representations} 

Essentially square integrable representations have been classified in \cite[Theorem~9.3]{zelevisnky_2} for example. They are all generic. To state this classification we introduce the special representation ${\rm St}(\tau,d)$ as the unique irreducible submodule of the induced representation
\begin{equation}
	{\rm Ind}_{P(F)}^{{\rm GL}_n(F)}(\abs{\cdot}^{\frac{d-1}{2}}\cdot\tau \otimes \ldots \otimes \abs{\cdot}^{-\frac{d-1}{2}}\cdot \tau), \nonumber
\end{equation}
where $d\mid n$, $\tau$ is an essentially supercuspidal representation of ${\rm GL}_{\frac{n}{d}}(F)$ and $P$ is the parabolic subgroup associated to the partition $(\frac{n}{d},\ldots,\frac{n}{d})$ of $n$. We observe that $\tau = {\rm St}(\tau,1)$ is essentially supercuspidal. Furthermore, if $n=1$, then $\tau=\chi$ is a quasi-character and ${\rm St}(\tau,n) = \chi\cdot {\rm St}_n$ is (a twist of) the usual Steinberg representation, explaining the notation. The classification now states that any essentially square integrable representation $\pi$ is isomorphic to ${\rm St}(\tau,d)$ for some $d\mid n$ and some essentially supercuspidal representation $\tau$ of ${\rm GL}_{\frac{n}{d}}(F)$. Thus the following result covers all essentially square integrable representations:

\begin{prop}\label{pr:sq_i}
Let $d\mid n$ and let $\tau$ be an essentially supercuspidal representation of $\textrm{GL}_{\frac{n}{d}}(F)$. If $\chi$ is a ramified quasi-character of $F^{\times}$ with 
\begin{equation*}
	a(\chi) \geq \frac{2da(\tau)+d}{n}-1,
\end{equation*}
then we have
\begin{equation*}
	\epsilon(s,\chi\cdot\textrm{St}(\tau,d),\psi) = \omega_{\textrm{St}(\tau,d)}(v_{\chi}\varpi^{a(\chi)-n(\psi)})\cdot \epsilon(s,\chi,\psi)^n.
\end{equation*}
\end{prop}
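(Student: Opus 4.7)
The strategy is to reduce Proposition~\ref{pr:sq_i} to the supercuspidal case handled by Proposition~\ref{prop:sc}, by exploiting multiplicativity of the local factors under parabolic induction. Since ${\rm St}(\tau,d)$ is the unique irreducible subrepresentation of $\Ind_{P(F)}^{\GL_n(F)}(\abs{\cdot}^{(d-1)/2}\tau\otimes\cdots\otimes\abs{\cdot}^{-(d-1)/2}\tau)$, the standard theory of $\gamma$-factors for generic representations of $\GL_n(F)$ (going back to Jacquet--Piatetski-Shapiro--Shalika) gives the identity
$$\gamma(s,\chi\cdot{\rm St}(\tau,d),\psi)=\prod_{j=0}^{d-1}\gamma\bigl(s+\tfrac{d-1}{2}-j,\chi\tau,\psi\bigr),$$
since any generic irreducible subquotient of the induced representation has the same $\gamma$-factor as the full induction.

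The next step is to upgrade this identity from $\gamma$-factors to $\epsilon$-factors. Under the hypothesis $a(\chi)\geq\tfrac{2da(\tau)+d}{n}-1$ combined with $\chi$ being ramified, each twist $\chi\abs{\cdot}^{(d-1)/2-j}\tau$ is either essentially supercuspidal on $\GL_{n/d}(F)$ with $n/d\geq 2$, or a ramified quasi-character of $\Fx$ (when $n/d=1$). In both situations the Godement--Jacquet $L$-factor is trivial, and the same holds for $\chi\cdot{\rm St}(\tau,d)$ itself and for all relevant contragredients. Consequently the $L$-factor ratios in the functional equation cancel, yielding
$$\epsilon(s,\chi\cdot{\rm St}(\tau,d),\psi)=\prod_{j=0}^{d-1}\epsilon\bigl(s+\tfrac{d-1}{2}-j,\chi\tau,\psi\bigr).$$

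Then I apply Proposition~\ref{prop:sc} to each factor on the right (or Lemma~\ref{lm_gl1_stab} when $n/d=1$, after reducing to unitary characters via the standard unramified-twist machinery). The conductor bound rewrites as $a(\chi)\geq\tfrac{2a(\tau)+1}{n/d}-1$, matching exactly the hypothesis of Proposition~\ref{prop:sc} applied to the essentially supercuspidal $\tau$ (the essentially vs.\ strictly supercuspidal distinction is handled by the same unramified-twist reduction used in the proof of Proposition~\ref{prop:sc}). Each factor then contributes $\omega_\tau(v_\chi\varpi^{a(\chi)-n(\psi)})\cdot\epsilon\bigl(s+\tfrac{d-1}{2}-j,\chi,\psi\bigr)^{n/d}$.

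Finally I assemble the product. The central character piece multiplies to $\omega_\tau^d(v_\chi\varpi^{a(\chi)-n(\psi)})=\omega_{{\rm St}(\tau,d)}(v_\chi\varpi^{a(\chi)-n(\psi)})$ since $\omega_{{\rm St}(\tau,d)}=\omega_\tau^d$ (a short calculation using the shifts $(d-1)/2-j$ summing to zero). For the $\epsilon$-factor part, the shift rule $\epsilon(s+s',\chi,\psi)=q^{(n(\psi)-a(\chi))s'}\epsilon(s,\chi,\psi)$ combined with the vanishing sum $\sum_{j=0}^{d-1}\bigl(\tfrac{d-1}{2}-j\bigr)=0$ collapses the product to $\epsilon(s,\chi,\psi)^n$. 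The main subtlety I expect is the promotion from $\gamma$-factor multiplicativity to $\epsilon$-factor multiplicativity, requiring a careful case split on $n/d$ to verify that every $L$-factor really does trivialize under the ramification hypothesis; once this is done the remaining steps are direct computation.
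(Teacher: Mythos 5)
Your proposal is correct and takes essentially the same route as the paper: reduce to the supercuspidal case (Proposition~\ref{prop:sc}, or Lemma~\ref{lm_gl1_stab} when $n/d=1$) via the product formula $\epsilon(s,\chi\cdot\mathrm{St}(\tau,d),\psi)=\epsilon(s,\chi\tau,\psi)^d$, which the paper asserts directly (in the equivalent unshifted form, valid once $\chi\tau$ is ramified) while you derive it from JPSS $\gamma$-factor multiplicativity together with the triviality of the relevant $L$-factors under the ramification hypothesis; the shift-cancellation $\sum_j(\tfrac{d-1}{2}-j)=0$ you carry out explicitly is what makes the two formulations coincide.
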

\begin{proof}
If $d=1$, then $\textrm{St}(\tau,d)=\tau$ is supercuspidal and the result follows from Proposition~\ref{prop:sc}.

Next we assume that $1<d<n$. Then we have $a(\textrm{St}(\tau,d)) = d\cdot a(\tau)$, $\omega_{\textrm{St}(\tau,d)} = \omega_{\tau}^d$ and
\begin{equation*}
	\epsilon(s,\chi\cdot\textrm{St}(\tau,d),\psi) = \epsilon(s,\textrm{St}(\chi\cdot \tau,d),\psi) = \epsilon(s,\chi\cdot \tau)^d.
\end{equation*}
In particular, if $a(\chi)\geq \frac{a(\tau)+1}{n/d}-1$, then Proposition~\ref{prop:sc} implies
\begin{equation*}
	\epsilon(s,\chi\cdot\textrm{St}(\tau,d),\psi) = [\omega_{\tau}(v_{\chi}\varpi^{a(\chi)-n(\psi)})\epsilon(s,\chi,\psi)^{\frac{n}{d}}]^d.
\end{equation*}
This is precisely the statement we are after.

Finally, we treat the case $d=n$, so that $\tau$ is a quasi-character of $F$. The usual reduction shows that it is sufficient to consider $\tau,\chi\in \mathfrak{X}$. Since we are assuming $\chi$ to be ramified we have 
\begin{equation*}
		\epsilon(s,\chi\cdot\textrm{St}(\tau,d),\psi) = \epsilon(s,\chi\tau,\psi)^n \nonumber
\end{equation*}
and our condition on the conductor reduces to $2a(\tau)\leq a(\chi)$. We conclude by an application of Lemma~\ref{lm_gl1_stab}.
\end{proof}

\begin{rem}
Note that Proposition~\ref{pr:sq_i} also applies to the case $n=1$, where it reduces essentially to Lemma~\ref{lm_gl1_stab}.	
\end{rem}

\subsection{The general case}

By the Langlands classification, all smooth admissible irreducible representations can be described through induced representations. Let $P(F)$ be a standard parabolic subgroup associated to the partition $(n_1,\ldots, n_r)$ of $n$ and let $\pi_1,\ldots, \pi_r$ be essentially supercuspidal representations of ${\rm GL}_{n_i}(F)$, then the representation $${\rm Ind}_{P(F)}^{{\rm GL}_n(F)}(\abs{\cdot }^{a_1}\pi_1\otimes \ldots \otimes  \abs{\cdot}^{a_r}\pi_r)$$ obtained by normalized parabolic induction has a unique irreducible quotient denoted by $J(\abs{\cdot }^{a_1}\pi_1,\ldots,\abs{\cdot }^{a_r}\pi_r)$ and called the Langlands quotient. It can be shown, see \cite[Theorem~6.1]{zelevisnky_2}, that every smooth irreducible admissible representation of ${\rm GL}_n(F)$ is isomorphic to $J(\abs{\cdot }^{a_1}\pi_1,\ldots,\abs{\cdot }^{a_r}\pi_r)$ for a uniquely determined multiset $\{\abs{\cdot }^{a_1}\pi_1,\ldots,\abs{\cdot }^{a_r}\pi_r\}$. Note that in this notation
\begin{equation}
	{\rm St}(\tau,d) = J(\abs{\cdot}^{-\frac{d-1}{2}}\cdot\tau \otimes \ldots \otimes \abs{\cdot}^{\frac{d-1}{2}}\cdot \tau). \nonumber
\end{equation}
About generic representations more can be said. First we introduce so called representations of Langlands type. These are induced representations $${\rm Ind}_{P(F)}^{{\rm GL}_n(F)}(\abs{\cdot }^{a_1}\pi_1\otimes \ldots \otimes \abs{\cdot}^{a_r}\pi_r)$$ as above with the condition that the representations $\pi_1,\ldots,\pi_r$ are irreducible unitary and tempered and the exponents $a_1,\ldots a_r$ are real and satisfy $a_1\geq a_2\geq \ldots \geq a_r$. We can specialize these induced further, to so called representations of Whittaker type, by requiring that the representations $\pi_1,\ldots,\pi_r$ are quasi square integrable. Note that representations of Whittaker type are not necessarily irreducible, but it is well understood by \cite[Theorem~9.7]{zelevisnky_2} when they are. The same theorem, establishes that every generic irreducible admissible smooth representation $\pi$ of ${\rm GL}_n(F)$ is isomorphic to an irreducible representation of Whittaker type:
\begin{equation}
	\pi \cong {\rm Ind}_{P(F)}^{{\rm GL}_n(F)}(\abs{\cdot }^{a_1}\pi_1\otimes \ldots \otimes \abs{\cdot}^{a_r}\pi_r), \nonumber
\end{equation}
for square integrable representations $\pi_1,\ldots,\pi_r$ and real numbers $a_1\geq \ldots\geq a_r$. The multiset $\{\abs{\cdot}^{a_1}\pi_1,\ldots,\abs{\cdot}^{a_r}\pi_r\}$ is uniquely determined by $\pi$. 

We are ready to prove the following central result:
\begin{prop}\label{pr:gen}
Let $\pi = {\rm Ind}_{P(F)}^{{\rm GL}_n(F)}(\abs{\cdot }^{a_1}\pi_1\otimes\ldots \otimes \abs{\cdot}^{a_r}\pi_r)$ be a representation of Whittaker type and suppose that $\pi_i=\textrm{Sp}(\tau_i,d_i)$. If $\chi$ is a quasi-character of $F^{\times}$ with
\begin{equation*}
	a(\chi) \geq \max_i \frac{2d_1a(\tau_i)+d_i}{n_i}-1,
\end{equation*}
then we have
\begin{equation*}
	\epsilon(s,\chi\cdot \pi,\psi) = \omega_{\pi}(v_{\chi}\varpi^{a(\chi)-n(\psi)}) \cdot \epsilon(s,\chi,\psi)^n.
\end{equation*}
\end{prop}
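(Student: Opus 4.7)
The plan is to combine the multiplicativity of $\epsilon$-factors along parabolic induction with Proposition~\ref{pr:sq_i}, which already handles each essentially square-integrable constituent.

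First, since twisting commutes with normalised parabolic induction, $\chi\cdot\pi$ is again a representation of Whittaker type with inducing data $\abs{\cdot}^{a_i}(\chi\cdot\pi_i)$ on $\GL_{n_i}(F)$. The classical multiplicativity of $\epsilon$-factors under parabolic induction (see for example \cite{godement_jacquet}) combined with the shift property $\epsilon(s,\abs{\cdot}^{s'}\sigma,\psi) = \epsilon(s+s',\sigma,\psi)$ recorded in Section~\ref{sec:loc_fac} yields
\begin{equation*}
	\epsilon(s,\chi\cdot\pi,\psi) = \prod_{i=1}^{r} \epsilon(s+a_i,\chi\cdot\pi_i,\psi).
\end{equation*}

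Second, each $\pi_i = \textrm{Sp}(\tau_i,d_i)$ is essentially square-integrable, and the assumption on $a(\chi)$ is precisely the maximum over $i$ of the bound required in Proposition~\ref{pr:sq_i} applied to the $\GL_{n_i}$-factor. Invoking that proposition in each slot gives
\begin{equation*}
	\epsilon(s+a_i,\chi\cdot\pi_i,\psi) = \omega_{\pi_i}(v_{\chi}\varpi^{a(\chi)-n(\psi)})\cdot\epsilon(s+a_i,\chi,\psi)^{n_i}.
\end{equation*}

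The final step is pure bookkeeping with $q$-powers. Using \eqref{eq:def_fpi} applied to $\chi$ (for which $f(\chi)=a(\chi)$), one has $\epsilon(s+a_i,\chi,\psi) = q^{(n(\psi)-a(\chi))a_i}\epsilon(s,\chi,\psi)$, so the $n_i$-th power contributes $q^{(n(\psi)-a(\chi))n_i a_i}$. Summed over $i$ the exponent becomes $(n(\psi)-a(\chi))\sum_i n_i a_i$, which, because $\abs{v_\chi\varpi^{a(\chi)-n(\psi)}} = q^{n(\psi)-a(\chi)}$, is exactly absorbed by the identity $\omega_\pi = \abs{\cdot}^{\sum_i n_i a_i}\prod_i\omega_{\pi_i}$ for the central character of a parabolically induced representation. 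Combined with $\sum_i n_i = n$, the collection of $\epsilon(s,\chi,\psi)^{n_i}$ folds into $\epsilon(s,\chi,\psi)^n$, and one reads off the claimed identity.

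The only non-routine ingredient is the multiplicativity of $\epsilon$-factors along parabolic induction; the rest is elementary, although the $q$-power bookkeeping in the last step is where sign and exponent errors are most likely to creep in and therefore deserves the most attention.
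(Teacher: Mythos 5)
Your proposal is correct and follows essentially the same route as the paper's own proof: factor $\epsilon(s,\chi\cdot\pi,\psi)$ using multiplicativity along parabolic induction, apply Proposition~\ref{pr:sq_i} to each essentially square-integrable block, and then reassemble the $q$-power contributions from the unramified twists $\abs{\cdot}^{a_i}$ into the evaluation of the full central character $\omega_\pi$ at $v_\chi\varpi^{a(\chi)-n(\psi)}$. Your explicit tracking of the exponent $(n(\psi)-a(\chi))\sum_i n_i a_i$ via \eqref{eq:def_fpi} is a slightly more careful rendering of the same bookkeeping the paper compresses into its final display.
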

\begin{proof}
We observe that 
\begin{equation*}
	\epsilon(s,\chi\cdot \pi,\psi) = \prod_i \epsilon(s,\vert\cdot\vert^a_i\chi\cdot \pi_i,\psi).\nonumber
\end{equation*}
Our assumption on the conductor exponent of $\chi$ ensures that we can apply Proposition~\ref{pr:sq_i} for each $i$. Since $n_1+\ldots+n_r=n$ we have
\begin{equation*}
	\epsilon(s,\chi\cdot \pi,\psi) = \left(\prod_i \omega_{\pi_i}(v_{\chi}\varpi^{a(\chi)-n(\psi)})\vert \varpi^{a(\chi)-n(\psi)}\vert^{a_i} \right)\epsilon(s,\chi,\psi)^n.
\end{equation*}
Recognizing the remaining product as the central character $\omega_{\pi}$ of $\pi$ evaluated at $v_{\chi}\varpi^{a(\chi)-n(\psi)}$ completes the proof.
\end{proof}

\begin{proof}[Proof of Theorem~\ref{main_th}]
Suppose $\pi$ is generic and let $\chi$ be a quasi-character of $F^{\times}$ with $a(\chi)\geq a(\pi)$. According to our classification above we can write
\begin{equation*}
	\pi = {\rm Ind}_{P(F)}^{{\rm GL}_n(F)}(\pi_1\otimes \ldots\otimes \pi_r),
\end{equation*}
for essentially square integrable representations $\pi_i$. 

First we suppose that
\begin{equation}
	a(\chi)\geq \frac{2}{n_i}a(\pi_i) \label{first_ass}
\end{equation}
for all $i$. Then we can apply Proposition~\ref{pr:sq_i} to find
\begin{equation}
	\epsilon(s,\chi\cdot\pi_i,\psi) = \omega_{\pi_i}(v_{\chi}\varpi^{a(\chi)-n(\psi)})\epsilon(s,\chi,\psi)^{n_i}. \nonumber
\end{equation}
In particular, taking the product over all $i$ yields
\begin{equation*}
	\epsilon(s,\pi,\psi) = \omega_{\pi}(v_{\chi}\varpi^{a(\chi)-n(\psi)})\epsilon(s,\chi,\psi)^n = \epsilon(s,\omega_{\pi}\chi,\psi)\cdot \epsilon(s,\chi,\psi)^{n-1}.
\end{equation*}
In the last step we have applied Lemma~\ref{lm_gl1_stab} and \eqref{eq:def_fpi}.

It remains to consider the possibility that \eqref{first_ass} fails for some $i\in \{1,\ldots,r\}$. Note that, since $a(\chi)\geq a(\pi)\geq a(\pi_i)$ this can only happen if $n_i=1$. Next we claim that this can only happen for at most one $1\leq i\leq r$. Indeed, suppose that $a(\chi) \leq 2a(\pi_i)$ and $a(\chi)\leq 2a(\pi_j)$ with $i\neq j$. Then 
\begin{equation*}
	a(\chi) \leq a(\pi_i)+ a(\pi_j) \leq a(\pi).
\end{equation*}
This is a contradiction. Without loss of generality we assume that $n_1=1$, $a(\chi)< 2a(\pi_1)$ and $a(\chi)\geq \frac{2}{n_i}a(\pi_i)$ for $i=2,\ldots,r$. Put $\omega'=\prod_{i=2}^r\omega_{\pi_i}$. Using Proposition~\ref{pr:sq_i} once again allows us to write
\begin{equation*}
	\epsilon(s,\chi\cdot\pi,\psi) = \omega'(v_{\chi}\varpi^{a(\chi)-n(\psi)})\epsilon(s,\chi\cdot\pi_1,\psi)\cdot \epsilon(s,\chi,\psi)^{n-1}.
\end{equation*}
If $\pi_i$ is ramified for at least one $i\in \{2,\ldots, r\}$, then we claim that 
\begin{equation}
	2a(\omega')\leq a(\chi\cdot \pi_1). \label{last_claim}	
\end{equation}
Indeed, if this is the case, then an application of Lemma~\ref{lm_gl1_stab} yields
\begin{equation*}
	\omega'(v_{\chi}\varpi^{a(\chi)-n(\psi)})\epsilon(s,\chi\cdot \pi_1,\psi) = \epsilon(s,\omega_{\pi}\chi,\psi)
\end{equation*}
and we are done.

To see \eqref{last_claim} we argue as follows. We first observe that the assumption ensures that $a(\chi)\geq a(\pi)>a(\pi_1)$ so that $a(\chi\cdot\pi_1)=a(\chi)$. On the other hand, we can use \eqref{eq:useful_cond} to find
\begin{equation*}
	a(\omega') \leq \max_{j=2,\ldots,r}a(\omega_{\pi_j})\leq \max_{j=2,\ldots,r}\frac{a(\pi_j)}{n_j} \leq \frac{a(\chi)}{2}.
\end{equation*}
This establishes the claim.

The slightly exceptional case when $\pi_2,\ldots,\pi_r$ are all unramified is easily treated by hand. This completes the proof. 
\end{proof}

\section{An explicit Bessel transform} \label{sec:bessel}

In this section we indicate how the stability result can be used to compute certain Bessel transforms. We will start by summarizing the assumptions we will make. Throughout we will let $\pi$ be a complex generic irreducible admissible smooth representation of $\textrm{GL}_n(F)$ such that
\begin{equation*}
	\max(1,a(\omega_{\pi})) <a(\pi).
\end{equation*}
We further assume that $\omega_{\pi}\in \mathfrak{X}$ and $n(\psi)=0$.

Recall from \cite[Lemma~5.2]{ichino_templier} or \cite[Propositon~3.1]{corbett_gln} that the Bessel transform $\mathcal{B}_{\pi}\Phi$ of $\Phi\in \mathcal{C}_c^{\infty}(F^{\times})$ is defined by the duality relations
\begin{multline}
	\int_{F^{\times}}[\mathcal{B}_{\pi}\Phi](y)\chi(y)^{-1}\vert y\vert^{s-\frac{n-1}{2}}d^{\times}y \\ = \chi(-1)^{n-1}\epsilon(1-s,\chi\pi,\psi)\frac{L(s,\chi^{-1}\widetilde{\pi})}{L(1-s,\chi\pi)} \int_{F^{\times}}\chi(x)\Phi(x)\vert x\vert^{1-s-\frac{n-1}{2}}d^{\times}x. \label{eq:bessel_trafro}
\end{multline}
This can be Mellin inverted and we obtain the exact formula
\begin{multline*}
	[\mathcal{B}_{\pi}\Phi](y) = \frac{\log(q)}{2\pi}\sum_{\chi\in \mathfrak{X}}\chi(-1)^{n-1}\chi(y)\int_{\sigma-i\frac{\pi}{\log(q)}}^{\sigma+i\frac{\pi}{\log(q)}} \epsilon(1-s,\chi\pi,\psi)\frac{L(s,\chi^{-1}\widetilde{\pi})}{L(1-s,\chi\pi)}\vert y\vert^{\frac{n-1}{2}-s} \\ \cdot \int_{F^{\times}}\Phi(x)\chi(x)\vert x\vert^{1-s-\frac{n-1}{2}}d^{\times}xds
\end{multline*}
as in \cite[(34)]{corbett_gln}. Note that here we normalize $d^{\times}x$ such that $\mathcal{O}^{\times}$ has volume $1$.

We define the function
\begin{equation}
	\Phi_z(x) = \psi(zx)\cdot\mathbbm{1}_{\mathcal{O}^{\times}}(x) \text{ for }z\in F^{\times} \text{ with }v(z) \leq -a(\pi). \label{Phi}
\end{equation}
In this case the $x$-integral above is a simple Gau\ss\  sum which we can evaluate using \cite[Lemma~4.5]{corbett_gln} for example. We record the following result in spirit of \cite[Proposition~4.7]{corbett_gln}:

\begin{lemmy}\label{prelim_eval}
Let $\pi$ be as above and let $\Phi_z$ be as in \eqref{Phi}. Then $[\mathcal{B}_{\pi}\Phi_z](y)=0$ unless $v(y)=nv(z)$ in which  case
\begin{equation*}
	[\mathcal{B}_{\pi}\Phi_z](y) = \vert y\vert^{\frac{(n-1)^2}{2n}}\cdot \frac{\vert z\vert^{-1}}{1-q^{-1}}
	 \sum_{\substack{\chi\in \mathfrak{X}, \\a(\chi)=-v(z)}} \chi((-1)^{n-1}yz^{-1})\epsilon(\frac{1}{2},\chi^{-1},\psi)\epsilon(\frac{1}{2},\chi\cdot\pi,\psi).
\end{equation*}
\end{lemmy}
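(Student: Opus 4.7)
The strategy is to substitute $\Phi_z$ into the Mellin-inversion formula for the Bessel transform displayed just above the statement, and reduce the resulting expression in three independent stages.

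First, I would evaluate the inner Mellin transform $\int_{F^\times}\Phi_z(x)\chi(x)|x|^{1-s-(n-1)/2}d^\times x$. Since $\Phi_z$ is supported on $\mathcal{O}^\times$ where $|x|=1$, it collapses to the twisted Gauss integral $\int_{\mathcal{O}^\times}\chi(x)\psi(zx)d^\times x$. Writing $z=z_0\varpi^{v(z)}$ with $z_0\in\mathcal{O}^\times$, substituting $x\mapsto z_0^{-1}x$, and applying the standard Gauss-sum evaluation (as in \cite[Lemma~4.5]{corbett_gln}) shows that the integral vanishes unless $a(\chi)=-v(z)$; the unramified case is excluded by our hypothesis $-v(z)\geq a(\pi)>1$. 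When $a(\chi)=-v(z)=:t$, using \eqref{eq:gl_2_rootnb} to convert the resulting Gauss sum into an $\epsilon$-factor produces
\[
\int_{\mathcal{O}^\times}\chi(x)\psi(zx)d^\times x = \frac{\chi(z_0^{-1})}{1-q^{-1}}q^{-t/2}\epsilon(\tfrac{1}{2},\chi^{-1},\psi).
\]

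Second, I would dispose of the remaining local-factor data. Under the condition $a(\chi)\geq a(\pi)$, the Langlands/Whittaker classification recalled in Section~\ref{sec:proof} implies that every essentially square-integrable constituent of $\pi$, once twisted by $\chi$, is highly ramified, so that $L(s,\chi^{-1}\widetilde{\pi})=L(1-s,\chi\pi)=1$. For the $\epsilon$-factor, the conductor identity $a(\chi\pi)=na(\chi)=nt$ (\cite[Proposition~2.2]{corbett_twist}) together with \eqref{eq:def_fpi} at $n(\psi)=0$ gives
\[
\epsilon(1-s,\chi\pi,\psi) = q^{nt(s-\tfrac{1}{2})}\epsilon(\tfrac{1}{2},\chi\pi,\psi).
\]

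Third, I would carry out the contour integration over $s$. After the above reductions, the entire $s$-dependence of the integrand concentrates in the factor $q^{s(nt+v(y))}$, a function periodic of period $2\pi i/\log q$. Integrating over the given contour produces $0$ unless the integer $nt+v(y)$ vanishes, in which case it cancels the Mellin-inversion prefactor. This yields the necessary condition $v(y)=nv(z)$ for non-vanishing. Finally, when $v(y)=nv(z)$, one assembles the $s$-independent constants: the characters combine via $\chi(\varpi)=1$ on $\mathfrak{X}$ into $\chi(-1)^{n-1}\chi(y)\chi(z_0^{-1}) = \chi((-1)^{n-1}yz^{-1})$, and a direct count of the accumulated $q$-powers recovers the stated prefactor $|y|^{(n-1)^2/(2n)}|z|^{-1}/(1-q^{-1})$.

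The argument is essentially bookkeeping and no conceptual obstacle arises. The one genuinely delicate point is verifying the triviality of the $L$-factors in the second step for every $\chi$ appearing in the sum, since a quasi-character component of $\pi$ could in principle contribute a non-trivial local $L$-factor when twisted by a $\chi$ of matching conductor; checking that our standing assumptions $a(\pi)>\max(1,a(\omega_\pi))$ and $a(\chi)\geq a(\pi)$ prevent this, together with the careful accounting of powers of $q$ in the final assembly, is where most of the care is needed.
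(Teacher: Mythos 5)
Your proposal is correct and follows essentially the same route as the paper: evaluate the Mellin transform of $\Phi_z$ as a Gauss sum and convert it to $\epsilon(\tfrac{1}{2},\chi^{-1},\psi)$, use $a(\omega_\pi)<a(\pi)\le a(\chi)$ to get $a(\chi\pi)=na(\chi)$ and to trivialize the $L$-factors, then let the $s$-contour produce the support condition $v(y)=nv(z)$ and assemble the constants. The delicate point you flag about quasi-character constituents contributing non-trivial $L$-factors is exactly the one the paper dispatches with the assumption $a(\omega_\pi)<a(\pi)$, and your bookkeeping of the $q$-powers does reproduce the stated prefactor.
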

\begin{proof}
We first note that
\begin{align*}
	\int_{F^{\times}}\Phi_z(x)\chi(x)\vert x\vert^{1-s-\frac{n-1}{2}}d^{\times}x &=  \int_{\mathcal{O}^{\times}}\psi(zx)\chi(x)d^{\times}x \\
	&= \delta_{a(\chi)=-v(z)}\frac{1}{1-q^{-1}} \vert z\vert^{-\frac{1}{2}}\chi(z)^{-1}\epsilon(\frac{1}{2},\chi^{-1},\psi).
\end{align*}	
Here we note that our assumptions imply $-v(z)\geq a(\pi)\geq 2$, so that there is no contribution from the trivial character.

Next notice that, since $a(\omega_{\pi})<a(\pi)\leq -v(z)=a(\chi)$, we have $a(\chi\pi)=na(\chi)$. We can thus rewrite the Bessel transform of $\Phi_z$ as
\begin{multline*}
	[\mathcal{B}_{\pi}\Phi_z](y) = \frac{1}{1-q^{-1}}\vert z\vert^{-\frac{n+1}{2}}\vert y\vert^{\frac{n-1}{2}} \sum_{\substack{\chi\in \mathfrak{X}, \\a(\chi)=-v(z)}} \chi((-1)^{n-1}yz^{-1})\epsilon(\frac{1}{2},\chi^{-1},\psi)\epsilon(\frac{1}{2},\chi\cdot\pi,\psi)  \\ \cdot \frac{\log(q)}{2\pi}\int_{\sigma-i\frac{\pi}{\log(q)}}^{\sigma+i\frac{\pi}{\log(q)}} \frac{L(s,\chi^{-1}\widetilde{\pi})}{L(1-s,\chi\pi)}q^{na(\chi)s}\vert y\vert^{-s}ds.
\end{multline*}
Once again $a(\omega_{\pi})<a(\pi)$ ensures that $$L(1-s,\chi\pi) = L(s,\chi^{-1}\widetilde{\pi})=1$$ for all $\chi$ with $a(\chi) \geq a(\pi)$. Thus we have obtained
\begin{equation*}
		[\mathcal{B}_{\pi}\Phi_z](y) = \frac{\delta_{nv(z)=v(y)}}{1-q^{-1}}\vert z\vert^{-\frac{n+1}{2}} \vert y\vert^{\frac{n-1}{2}}\sum_{\substack{\chi\in \mathfrak{X}, \\a(\chi)=-v(z)}} \chi((-1)^{n-1}yz^{-1})\epsilon(\frac{1}{2},\chi^{-1},\psi)\epsilon(\frac{1}{2},\chi\cdot\pi,\psi).
\end{equation*}
Reformulating this slightly gives the desired result.
\end{proof}

Finally, define the (twisted) Hyperkloosterman sum as
\begin{equation*}
	\textrm{KL}_{\omega_{\pi},n}(y;t) = \sum_{x_1\in (\mathcal{O}/\mathfrak{p}^t)^{\times} }\cdots \sum_{x_{n-1}\in (\mathcal{O}\mathfrak{p}^{t})^{\times}}\omega_{\pi}(x_1)\psi\left(\varpi^{-t}\left[x_1+\ldots+x_{n-1}+\frac{y}{x_1\cdots x_{n-1}}\right]\right).
\end{equation*}

\begin{prop}\label{prop:bessel}
Let $\pi$ be a complex generic irreducible smooth admissible representation of $\textrm{GL}_n(F)$ with $\max(1,a(\omega_{\pi}))<a(\pi)$. For $z\in F^{\times}$ with $-v(z)\geq a(\pi)$ and $\Phi_z$ as in \eqref{Phi} we have
\begin{equation*}
	[\mathcal{B}_{\pi}\Phi_z](y) = \frac{1}{(1-q^{-1})^{n-1}}\vert y\vert^{\frac{(n-4)(n-1)}{2n}} \begin{cases}
		\textrm{KL}_{\omega_{\pi}^{-1},n-1}(a(y,z),-v(z)) &\text{ if }v(y)=nv(z),\\
		0&\text{ else}
	\end{cases}
\end{equation*}
for $a(y,z)=(-1)^nyz^{-1}\varpi^{-(n-1)v(z)}$.
\end{prop}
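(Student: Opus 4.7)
The plan is to start from Lemma~\ref{prelim_eval}, which identifies $[\mathcal{B}_\pi\Phi_z](y)$ with a finite character sum supported on $v(y)=nv(z)$. Set $t=-v(z)$. Since every character $\chi$ occurring in that sum satisfies $a(\chi)=t\geq a(\pi)$, Theorem~\ref{main_th} applies and factors each $\epsilon(\tfrac12,\chi\otimes\pi,\psi)$ as $\epsilon(\tfrac12,\omega_\pi\chi,\psi)\cdot\epsilon(\tfrac12,\chi,\psi)^{n-1}$. Writing each $\epsilon$-factor as a classical Gau\ss\ sum via $\epsilon(\tfrac12,\chi,\psi) = \tau(\chi^{-1})q^{-t/2}$ turns the summand into the Gau\ss\ sum monomial $\tau(\chi)\tau(\omega_\pi^{-1}\chi^{-1})\tau(\chi^{-1})^{n-1}$ multiplied by a common power of $q$.

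Next I would apply the reciprocity $\tau(\chi)\tau(\chi^{-1}) = \chi(-1)q^t$, valid because $\chi$ is primitive of conductor $t$. This reduces the monomial to $\chi(-1)q^t\cdot\tau(\omega_\pi^{-1}\chi^{-1})\tau(\chi^{-1})^{n-2}$ and combines with the character twist $\chi((-1)^{n-1}yz^{-1})$ to give $\chi((-1)^n yz^{-1}) = \chi(a(y,z))$. Opening the remaining Gau\ss\ sums as sums over the finite group $H_t := \mathcal{O}^\times/(1+\mathfrak{p}^t)$ and interchanging the orders of summation then isolates the inner character sum $\sum_{\chi\in\mathfrak{X},\,a(\chi)=t}\chi(a(y,z)/(x_1\cdots x_{n-1}))$.

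The main obstacle is evaluating this inner sum. By inclusion--exclusion it equals $|H_t|\cdot\mathbbm{1}_{x_1\cdots x_{n-1}\equiv a(y,z)\bmod \mathfrak{p}^t} - |H_{t-1}|\cdot\mathbbm{1}_{x_1\cdots x_{n-1}\equiv a(y,z)\bmod \mathfrak{p}^{t-1}}$. The first indicator enforces the multiplicative constraint $x_1\cdots x_{n-1}\equiv a(y,z)\bmod\mathfrak{p}^t$, which, after relabelling one variable so that $\omega_\pi^{-1}$ sits on the distinguished Kloosterman argument, delivers precisely $|H_t|\cdot\textrm{KL}_{\omega_\pi^{-1},n-1}(a(y,z);t)$. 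To kill the second (error) indicator I would substitute $x_1\mapsto (1+w\varpi^{t-1})x_1$ to absorb the weaker congruence into the stronger one; this substitution introduces an extra factor $\psi(w\varpi^{-1}x_1)$ into the $\psi$-argument, and the resulting sum over $w\in\mathcal{O}/\mathfrak{p}$ is a nontrivial additive character sum on $\mathcal{O}/\mathfrak{p}$ (because $x_1\in\mathcal{O}^\times$ and $n(\psi)=0$), hence vanishes.

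The remainder is bookkeeping: assembling the $|H_t|$ from orthogonality, the powers of $q$ coming from $|y|^{(n-1)^2/(2n)}$, $|z|^{-1}$ and the $q^{-(n-1)t/2}$ residual from the reciprocity step, together with the factor $1/(1-q^{-1})$ from Lemma~\ref{prelim_eval}, and using $v(y)=nv(z)$ to convert between $|y|$ and powers of $q^t$, should reproduce the claimed prefactor $|y|^{(n-4)(n-1)/(2n)}/(1-q^{-1})^{n-1}$.
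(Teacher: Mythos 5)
Your proposal is correct and follows essentially the same route as the paper: start from Lemma~\ref{prelim_eval}, apply Theorem~\ref{main_th} to factor $\epsilon(\tfrac12,\chi\cdot\pi,\psi)$, cancel $\epsilon(\tfrac12,\chi,\psi)\epsilon(\tfrac12,\chi^{-1},\psi)=\chi(-1)$, express the remaining factors as Gau\ss{} sums, and apply orthogonality in $\chi$ to produce the multiplicative constraint defining the hyper-Kloosterman sum. The only (cosmetic) difference is in the orthogonality step: you keep the sum over $a(\chi)=t$ and write it as $|H_t|\cdot\mathbbm{1}_{\bmod\,\mathfrak{p}^t}-|H_{t-1}|\cdot\mathbbm{1}_{\bmod\,\mathfrak{p}^{t-1}}$, killing the second term by a $1+w\varpi^{t-1}$-averaging argument (note this also uses $a(\omega_\pi)\le t-1$, which you should say explicitly), whereas the paper relaxes the sum to $a(\chi)\le t$ from the outset -- the extra characters contribute nothing because their Gau\ss{} integrals against $\psi(z\cdot)$ vanish -- and then uses the cleaner complete orthogonality relation; these are algebraically equivalent manipulations.
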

\begin{proof}
According to Lemma~\ref{prelim_eval} we can assume that $v(y)=nv(z)$. Due to the assumption $a(\pi)\leq -v(z)$ we can apply Theorem~\ref{main_th} and obtain
\begin{equation*}
	[\mathcal{B}_{\pi}\Phi_z](y) = \vert y\vert^{\frac{(n-1)^2}{2n}}\cdot \frac{\vert z\vert^{-1}}{1-q^{-1}}\sum_{\substack{\chi\in \mathfrak{X}, \\a(\chi)=-v(z)}} \chi((-1)^{n}yz^{-1})\epsilon(\frac{1}{2},\chi,\psi)^{n-2}\epsilon(\frac{1}{2},\chi\omega_{\pi},\psi).
\end{equation*}
At this point we can express the $\epsilon$-factors as Gau\ss\  sums as in \eqref{eq:gl_2_rootnb}. This gives
\begin{multline*}
	[\mathcal{B}_{\pi}\Phi_z](y) = \vert y\vert^{\frac{(n-2)(n-1)}{2n}}\cdot \frac{\vert z\vert^{-1}}{1-q^{-1}}\sum_{\substack{\chi\in \mathfrak{X}, \\a(\chi)=-v(z)}}\chi((-z)^{-n}y)\omega_{\pi}(z)^{-1}\\ \int_{\mathcal{O}^{\times}}\cdots \int_{\mathcal{O}^{\times}} \omega_{\pi}(x_1)^{-1}\chi(x_1\cdots x_{n-1})^{-1}\psi(z[x_1+\ldots+x_{n-1}])dx_1\cdots dx_{n-1}.
\end{multline*}
Using $-v(z)\geq 2$ we can now relax the condition $a(\chi)=-v(z)$ to $a(\chi)\leq -v(z)$ without picking up any extra contribution from the trivial character. Executing the $\chi$-sum then yields
\begin{multline*}
	[\mathcal{B}_{\pi}\Phi_z](y) = \vert y\vert^{\frac{(n-2)(n-1)}{2n}} \underset{x_1\cdots x_{n-1}(-z)^{n}y^{-1}\in 1+\varpi^{-v(z)}\mathcal{O}}{\int_{\mathcal{O}^{\times}}\cdots \int_{\mathcal{O}^{\times}}} \omega_{\pi}(zx_1)^{-1} \\ \cdot \psi(z[x_1+\ldots+x_{n-1}])dx_1\cdots dx_{n-1}.
\end{multline*}
Here we have used that $\sharp \{ \chi\in \mathfrak{X}\colon a(\chi)\leq -v(z)\} = \vert z\vert(1-q^{-1})$. Writing for the moment $z=z_0\cdot\varpi^{v(z)}$ and $y=y_0\varpi^{nv(z)}$ allows us to rewrite this as
\begin{multline*}
	[\mathcal{B}_{\pi}\Phi_z](y) = \vert y\vert^{\frac{(n-2)(n-1)}{2n}}\frac{\vert z\vert^{-1}}{1-q^{-1}} \\ \int_{\mathcal{O}^{\times}}\cdots \int_{\mathcal{O}^{\times}} \omega_{\pi}(x_1)^{-1}\psi\left(\varpi^{v(z)}\left[x_1+\ldots+x_{n-2}+\frac{(-1)^ny_0z_0^{-1}}{x_1\cdots x_{n-2}}\right]\right)dx_1\cdots dx_{n-2}.
\end{multline*}
Replacing the $\mathcal{O}^{\times}$-integrals by the appropriate sums gives the desired result.
\end{proof}

\begin{rem}
The assumptions such as $a(\pi)>\max(1,a(\omega_{\pi}))$ is purely of cosmetic nature. Indeed, removing it requires some more bookkeeping. Since the general result does not look as clean we have decided to omit the details.
\end{rem}

\end{document}